\newtheorem{theorem}{Theorem}[section]
\newtheorem{lemma}[theorem]{Lemma}
\newtheorem{question}[theorem]{Question}
\newtheorem{corollary}[theorem]{Corollary} 
\theoremstyle{definition}
\newtheorem{definition}[theorem]{Definition}
\theoremstyle{remark}
\numberwithin{equation}{theorem}
\renewcommand{\phi}{\varphi}
\title[Splits of c.e.\ sets]{On Splits of Computably enumerable sets}
\author[Peter Cholak]{Peter~A.~Cholak}
\address{Department of Mathematics\\ University of Notre Dame\\ 
  Notre Dame, IN 46556-5683}
\email{Peter.Cholak.1@nd.edu}
\urladdr{http://www.nd.edu/~cholak}
\thanks{Draft as of \today. We want to thank V.\ Yu.\ Shavrukov for
  allowing us to include his result, Theorem~\ref{Shav}.  Without it,
  this paper would look very different. This research was started
  while Cholak participated in the Buenos Aires Semester in
  Computability, Complexity and Randomness, 2013.  Thanks to Rachel
  Epstein, Greg Igusa, Nathan Pierson, Mike Stob, and the referees for
  comments and suggestions.  My interest in Friedberg splits was
  sparked in 1989 by Rod Downey. I cannot forgive him.}
\subjclass[2000]{Primary 03D25}
\begin{document}

 \begin{abstract}

   Our focus will be on the computably enumerable (c.e.) sets and
   trivial, non-trivial, Friedberg, and non-Friedberg splits of the
   c.e.\ sets.  Every non-computable set has a non-trivial Friedberg
   split.  Moreover, this theorem is uniform.  V.\ Yu.\ Shavrukov
   recently answered the question which c.e.\ sets have a non-trivial
   non-Friedberg splitting and we provide a different proof of his
   result.  We end by showing there is no uniform splitting of all
   c.e.\ sets such that all non-computable sets are non-trivially
   split and, in addition, all sets with a non-trivial non-Friedberg
   split are split accordingly.
 \end{abstract}

 \maketitle

 \section{Trivial Splits}

 Given a c.e.\ set $A$, a \emph{split} of $A$ is a pair of c.e.\ sets
 $A_0, A_1$ such that $A_0 \sqcup A_1 = A$, $\sqcup$ is disjoint
 union.  If one of $A_0$ or $A_1$ is computable the splitting is
 \emph{trivial}.  If $A_0$ is computable then
 $A = A_0 \sqcup (\overline{A_0} \cap A)$.

 It is straightforward to see that any splitting of a computable set
 is trivial.  Given a c.e.\ set $A$, letting $A_0 = \emptyset$ and
 $A_1 = A$, provides a trivial splitting of $A$.  We would like to
 avoid splits where one of the sets is finite.  It is known that every
 infinite c.e.\ set $A$ has an infinite computable subset $R$.  This
 provides a trivial splitting of $A$,
 $A = R \sqcup (A \cap \overline{R})$, into two infinite c.e.\ sets
 assuming $A$ is not computable. 

Given this, Myhill asked 
\begin{question}[\citet{Myhill}]
  Does every non-computable c.e.\ set have a non-trivial splitting?
\end{question}
Myhill's Question was answered positively by \citet{MR0109125}.

\section{Friedberg Splits}

Most of this section is known but we wanted to provide an explicit
proof of Corollary~\ref{uni}.  This corollary will be useful
later.  One of the focuses of this paper is splitting procedures that
always produce a non-trivial split when possible.

At this point we will fix the standard uniform enumeration $W_{e,s}$
of all c.e.\ sets with the convention that at stage $s$, there is at
most one pair $e, x$ where $x$ enters $W_e$ at stage $s$.  Some
details on how we can effectively achieve this enumeration can be
found in \citet[Exercise I.3.11]{Soare:87}.  

Every c.e.\ set has an index according to this fixed enumeration.  For
the sets that we construct we have to appeal to Kleene's Recursion
Theorem to find this index. Moreover, by the standard trick of slowing
down or pausing our construction, we can assume the enumerations of
our fixed point $W_e$ and our constructed set $A$ are the same.  Our
construction, at times,
 will construct sets other than $A$.  While we
will focus on the constructed sets, the actual outcome of our
constructions will be a uniform enumeration of all constructed sets.
We will be using Kleene's Recursion Theorem with parameters to get
a function from each constructed set to an index with the same
enumeration for that set in the above enumeration.

By the Padding Lemma, we know that each c.e. set $A$ has
infinitely many indices.  By Rice's Theorem, we know that for a given
c.e.\ set $A$ the set of indices (in this fixed enumeration) for $A$ is
not computable.

Also at this point we will fix the convention that $A$, $B$, $W$, $X$
and $Y$ always refer to c.e.\ sets with some fixed index in our given
enumeration.  Now we need the following.

\begin{definition}
  A split $A_0 \sqcup A_1 = A$ is a \emph{Friedberg
    split} of $A$ iff, for all $W$, if $W-A$ is not c.e.\ then both
  $W-A_0$ and $W-A_1$ are not c.e. sets.
\end{definition}

\begin{lemma}
  If $A$ is not computable and $A_0 \sqcup A_1$ is a Friedberg split
then the split is not trivial.
\end{lemma}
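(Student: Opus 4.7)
My plan is to argue by contraposition: assume the split is trivial and deduce that $A$ must be computable, contradicting the hypothesis on $A$.

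Without loss of generality, suppose $A_0$ is the computable half of the split. I would instantiate the defining condition of a Friedberg split with the c.e.\ set $W = \omega$ (which, being c.e., falls under the convention that $W$ ranges over c.e.\ sets with a fixed index). Then $W - A = \overline{A}$, and since $A$ is not computable this complement is not c.e. The Friedberg-split hypothesis therefore forces $W - A_0 = \overline{A_0}$ to fail to be c.e. as well.

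But $A_0$ is computable, so $\overline{A_0}$ is c.e., which contradicts the previous sentence. Hence no half of the split can be computable, and the split is non-trivial. The symmetric case with $A_1$ computable is handled identically by applying the Friedberg condition to the other half.

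I do not anticipate any real obstacle here: the only thing to check is that $\omega$ qualifies as a legitimate choice of $W$ under the fixed enumeration convention, which is immediate. If one preferred to avoid $W = \omega$, the identical argument goes through with any c.e.\ superset of $\overline{A}$, e.g.\ $W = \overline{A_0}$ itself (c.e.\ by computability of $A_0$), yielding $W - A = \overline{A_0} \cap \overline{A} = \overline{A}$ and $W - A_0 = \emptyset$, again c.e.
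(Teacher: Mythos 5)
Your argument is correct and is essentially the paper's own proof: both take $W = \mathbb{N}$, observe that $\mathbb{N} - A = \overline{A}$ is not c.e.\ since $A$ is not computable, and invoke the Friedberg condition to conclude that $\overline{A_0}$ and $\overline{A_1}$ are not c.e., i.e.\ neither half is computable. You merely present it in contrapositive form, which is an immaterial stylistic difference.
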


\begin{proof}
  $\mathbb{N} -A$ is not c.e.\ so $\mathbb{N} -A_0$ and
  $\mathbb{N} -A_1$ are not c.e.\ and hence $A_0$ and $A_1$ are not
  computable.
\end{proof}

\begin{definition}
  For $A = W_e$ and $B=W_i$,
  $$A \backslash B = \{ x | \exists s [x \in (W_{e,s}-W_{i,s})]\}$$
  and $A\searrow B = A \backslash B \cap B$. (This is with respect to
  our given enumeration and hence this definition depends on our
  chosen enumeration.)
\end{definition}

By the above definition, $A\backslash B$ is a c.e.\ set.
$A \backslash B$ is the set of balls that enter $A$ before they enter
$B$.  If $x \in A \backslash B$ then $x$ may or may not enter $B$ and
if $x$ does enter $B$, it only does so after $x$ enters $A$ (in terms
of our enumeration). Since the intersection of two c.e.\ sets is c.e.,
$A \searrow B$ is a c.e.\ set.  The c.e.\ set $A\searrow B $ is the
c.e.\ set of balls that first enter $A$ and then enter $B$ (under the
above enumeration).  So $A \backslash B$ reads ``$A$ before $B$'' and
$A\searrow B$ reads ``$A$ before $B$ and then $B$''.

Note that for all $W$, $W\backslash A = (W-A) \sqcup (W\searrow
A)$.
Since $W\backslash A$ is a c.e.\ set, if $W-A$ is not a c.e.\ set then
$W\searrow A$ must be infinite.  (This happens for all enumerations
not just our given enumeration.)

\begin{lemma}[Friedberg]
  Assume $A= A_0 \sqcup A_1$, and, for all $e$, if $W_e \searrow A$ is
  infinite then both $W_e \searrow A_0$ and $W_e \searrow A_1$ are
  infinite.  Then $A_0 \sqcup A_1$ is a Friedberg split of $A$.
\end{lemma}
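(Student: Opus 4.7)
The plan is to prove the contrapositive of the Friedberg condition: if $W - A_i$ is c.e.\ for some $i \in \{0,1\}$, then $W - A$ is c.e. By symmetry it suffices to handle $i = 0$, so let $V$ be a c.e.\ set with $V = W - A_0$, and fix any index $e$ with $V = W_e$.

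The key observation is set-theoretic. Since $V \subseteq \overline{A_0}$ we have $V \cap A_0 = \emptyset$, and hence
\[
V \searrow A_0 \;=\; (V \backslash A_0) \cap A_0 \;\subseteq\; V \cap A_0 \;=\; \emptyset.
\]
Thus $V \searrow A_0$ is empty, and in particular finite. The contrapositive of the lemma's hypothesis, applied with $W_e = V$, therefore forces $V \searrow A$ to be finite. Invoking the identity $V \backslash A = (V - A) \sqcup (V \searrow A)$ recorded just before the lemma, $V - A$ is the set-theoretic difference of the c.e.\ set $V \backslash A$ and a finite set, hence itself c.e.

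To finish, I note that $V - A = W - A$: the inclusion $V - A \subseteq W - A$ is immediate from $V \subseteq W$, while the reverse inclusion follows from $W - A \subseteq W \cap \overline{A_0} = V$ together with $W - A \subseteq \overline{A}$. So $W - A$ is c.e., completing the contrapositive. I anticipate no serious obstacle: the argument is pleasingly one-sided in that only the hypothesis on $W_e \searrow A_0$ is invoked, and the potentially delicate question of how $V \searrow A_1$ compares with $V \searrow A$ under the fixed enumeration is sidestepped entirely. The one step worth a moment's care is that the hypothesis must be applicable to \emph{some} fixed index $e$ for $V$; but ``$V$ is c.e.'' means precisely that such $e$ exists, and the conclusion $V \searrow A_0 = \emptyset$ is independent of the choice of $e$.
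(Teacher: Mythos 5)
Your proof is correct and is essentially the paper's argument, presented as a direct contrapositive rather than a proof by contradiction: both hinge on the same three observations, namely $(W-A_0)-A = W-A$, $(W-A_0)\searrow A_0 = \emptyset$, and the equivalence (via $V\backslash A = (V-A)\sqcup(V\searrow A)$) between $V-A$ being c.e.\ and $V\searrow A$ being finite. Your remark that the conclusion $V\searrow A_0 = \emptyset$ is independent of the choice of index $e$ for $V$ is a nice bit of care, but the substance matches the paper's proof.
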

 
\begin{proof}
  Assume that $W-A$ is not a c.e.\ set but $X= W-A_0$ is a c.e.\ set.
  $X-A = (W-A_0) - A = W-A$ is not a c.e.\ set.  So $X\searrow A$ is
  infinite which implies that $X \searrow A_0$ is infinite but
  $X\searrow A_0 = (W -A_0) \searrow A_0 = \emptyset$.  Contradiction.
\end{proof}

Friedberg invented the priority method to split every c.e.\ set into
two disjoint c.e.\ sets while meeting the hypothesis of the above
lemma.

\begin{theorem}[Friedberg]\label{Fried}
  Every non-computable set $A$ has a Friedberg split.
\end{theorem}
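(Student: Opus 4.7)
The plan is to set up a priority construction meeting requirements that, by the preceding lemma, guarantee a Friedberg split. For each triple $\langle e,i,n\rangle$ with $e,n \in \omega$ and $i \in \{0,1\}$, I would introduce the requirement
\[
R_{e,i,n}:\ \text{either } W_e \searrow A \text{ is finite, or } |W_e \searrow A_i| \geq n.
\]
Meeting every $R_{e,i,n}$ forces $W_e \searrow A_i$ to be infinite whenever $W_e \searrow A$ is, after which the preceding lemma delivers a Friedberg split. Fix any effective priority listing of the $R_{e,i,n}$ of order type $\omega$.

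For the construction, I would enumerate $A_0$ and $A_1$ in lockstep with the given enumeration of $A$. At stage $s$, let $x$ be the unique ball (if any) entering $A$ at stage $s$; search for the highest-priority requirement $R_{e,i,n}$ of priority index at most $s$ that is currently \emph{unmet}, meaning $|W_e \searrow A_i|_s < n$, and such that $x \in W_{e,s}$. If such a requirement is found, enumerate $x$ into $A_i$; otherwise enumerate $x$ into $A_0$ by default. Because each ball of $A$ is placed in exactly one of $A_0,A_1$ and never removed, $A = A_0 \sqcup A_1$ holds automatically.

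Verification is by induction on priority rank, turning on the identity ``$x$ enters $A$ at stage $s$ with $x \in W_{e',s}$'' is the same as ``$x \in W_{e'} \searrow A$'', which holds thanks to the enumeration convention that no two balls enter any $W_e$'s at the same stage. A higher-priority requirement $R_{e',i',n'}$ then falls into one of two benign cases: either $W_{e'} \searrow A$ is finite, so only finitely many candidate balls for it ever exist; or $W_{e'} \searrow A$ is infinite, so by the inductive hypothesis $R_{e',i',n'}$ is eventually met, after which it stops claiming balls. Either way there is a stage $s_0$ beyond which no higher-priority requirement is ever attended. Assuming $W_e \searrow A$ is infinite, infinitely many candidate $x$ still enter $A$ after $s_0$, and each is attended by the least-rank unmet $R_{e,i',m}$ with $i' \in \{0,1\}$ having $x \in W_{e,s}$; iterating this, every $R_{e,i,n}$ is met in finitely many further stages.

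The main obstacle is verifying that a higher-priority requirement which is \emph{vacuous}---because $W_{e'} \searrow A$ is already finite---nevertheless stops claiming balls, since such a requirement is never formally ``met'' in the construction's sense. The key identity above settles this at once: once the finite set $W_{e'} \searrow A$ is exhausted, no further candidates for $R_{e',i',n'}$ can arise, so it cannot attend any subsequent ball. With this observation the construction is finite-injury in a strong sense and the induction closes cleanly, establishing the theorem.
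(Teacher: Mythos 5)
Your proof is correct and follows essentially the same approach as the paper: the same requirements $R_{e,i,n}$ (the paper's $\mathcal{P}_{e,i,k}$), the same priority-driven placement of each ball entering $A$ into $A_0$ or $A_1$, and the same finite-injury verification (which the paper compresses into ``It is not hard to show''). The only differences are cosmetic bookkeeping choices, such as restricting to requirements of priority index at most $s$.
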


\begin{proof}
  When a ball $x$ enters $A$ at stage $s$ we add it to one of $A_0$ or
  $A_1$ but which one $x$ enters is determined by priority.  Our
  requirements are:
  \begin{equation}\tag*{$\mathcal{P}_{e,i,k}$:}
    \label{eq:1}
    \text{if }W_e \searrow A \text{ is infinite then } |W_e \searrow A_i
    | \geq k. 
  \end{equation}
  We say $x$ meets $\mathcal{P}_{e,i,k}$ at stage $s$ if
  $|W_e \searrow A_i | = k - 1$ by stage $s-1$ and if we add $x$ to
  $A_i$ at stage $s$ then $|W_e \searrow A_i | = k$ at stage $s$.
  Find the smallest $\langle e, i, k \rangle$ that $x$ can meet and
  add $x$ to $A_i$ at stage $s$. If no such triple can be found, add
  $x$ to $A_0$ at stage $s$. It is not hard to show that all the
  $\mathcal{P}_{e,i,k}$ are met.
\end{proof}

Observe that the procedure in Theorem~\ref{Fried} is uniform. Given
this we made the following defintion and corollary.


\begin{definition}
  A computable function $h$ is a \emph{splitting procedure} iff, for
  all $e$, if $h (e) = \langle e_0, e_1 \rangle$ then
  $W_{e_0} \sqcup W_{e_1}$ is a split of $A$ and if $W_e$ is not
  computable then this split is not trivial.  If $h$ is a splitting
  procedure, we say that $h(e)$ gives a split of
  $W_e$ or splits $W_e$. 
\end{definition}

\begin{corollary}[of Friedberg's Proof] \label{uni} There is a
  splitting procedure $h$ such that if $W_e$ is not computable then
  $h(e)$ gives a Friedberg split of $W_e$.
\end{corollary}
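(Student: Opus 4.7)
The plan is to formalize the observation (already made in the paper right after Theorem~\ref{Fried}) that Friedberg's construction is uniform in an index $e$ for $A = W_e$. Running the construction simultaneously for every index $e$ yields uniformly c.e.\ sets $A_0^e, A_1^e$ with $A_0^e \sqcup A_1^e = W_e$, and then the $s$-$m$-$n$ theorem produces total computable functions $e \mapsto e_0(e)$ and $e \mapsto e_1(e)$ satisfying $W_{e_0(e)} = A_0^e$ and $W_{e_1(e)} = A_1^e$. Setting $h(e) = \langle e_0(e), e_1(e)\rangle$ gives the desired splitting procedure.

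First I would spell out the uniform construction. At stage $s$, consult the fixed enumeration $W_{e,s}$; if a ball $x$ enters $W_e$ at stage $s$, search among triples $\langle e',i,k\rangle$ bounded by $s$ for the smallest one $x$ can meet in the sense of the proof of Theorem~\ref{Fried} relative to the finite approximations of $A_0^e, A_1^e$ already built, and place $x$ into the corresponding $A_i^e$ (defaulting to $A_0^e$ if no such triple exists). All of this is uniformly computable in $e$, so $s$-$m$-$n$ supplies the functions $e_0(\cdot), e_1(\cdot)$. Using the paper's standing ``slow down'' convention on the constructed sets, I can moreover arrange that the stage at which $x$ enters $W_{e_i(e)}$ in the universal enumeration is exactly the stage at which the construction places $x$ into $A_i^e$; this matters because the ``$\searrow$'' operation used in Friedberg's requirements is enumeration-sensitive, and we need the meaning inside the construction to agree with the meaning applied to the output indices.

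Next I would verify the two clauses of the definition of splitting procedure. By construction, each ball entering $W_e$ is placed into exactly one of $A_0^e, A_1^e$, and nothing else is ever enumerated into either set, so $W_{e_0(e)} \sqcup W_{e_1(e)} = W_e$ on the nose. If $W_e$ is non-computable, the verification in Theorem~\ref{Fried} applies verbatim to show each requirement $\mathcal{P}_{e',i,k}$ is met; the preceding Friedberg Lemma then upgrades this to a Friedberg split of $W_e$, and the lemma immediately after the definition of Friedberg split guarantees non-triviality. When $W_e$ is computable, the split produced may well be trivial, but the definition of splitting procedure imposes no condition in that case.

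There is no substantive obstacle — the content is entirely the observation that Friedberg's priority argument depends on $e$ only through the uniformly given enumeration $W_{e,s}$, so it can be uniformized and then fed into $s$-$m$-$n$. The only technical point requiring care is the enumeration-matching between the constructed $A_i^e$ and the universal indices $W_{e_i(e)}$, which is precisely what the pause/slow-down construction noted in the paper's preliminary remarks is designed to achieve.
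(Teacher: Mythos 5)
Your proposal is correct and is essentially the paper's own argument: the paper simply observes that Friedberg's priority construction in Theorem~\ref{Fried} is uniform in the index $e$ (via the fixed enumeration $W_{e,s}$), and your write-up supplies the $s$-$m$-$n$ bookkeeping, the enumeration-matching via the slow-down convention, and the appeal to the earlier lemmas for the Friedberg and non-triviality properties, all of which are the intended (implicit) details. No gap.
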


\section{Non-Trivial non-Friedberg
    Splits}
  
The above section brings us to the following question:

\begin{question}
    When does a c.e.\ set have a non-trivial non-Friedberg split? 
  \end{question}

  This question was first asked, in a different form, as Question~1.4
  in \citet{incomputable}. In \cite{incomputable}, it was asked if
  there is a definable collection of c.e.\ sets such that for each set
  $A$ in this collection the Friedberg splits of $A$ are a proper
  subclass of the non-trivial splits of $A$.  This question later
  appeared, in yet a different form, as Question~4.6 in the first
  unpublished version of \citet*{MR3436364}.  There it was suggested
  to compare the class of all c.e.\ sets all of whose non-trivial
  splits are Friedberg with the $\mathcal{D}$-maximal sets (defined
  below).  As we will see in Theorem~\ref{Shav} every form of this
  question was answered by \citet{Shav}. Shavrukov showed that a c.e.\
  set $A$ has a non-trivial non-Friedberg split iff $A$ is not
  $\mathcal{D}$-maximal.


  \subsection{There are c.e.\ sets with non-trivial non-Friedberg
    Splits}\label{sec:proviso-2:-there}

  Let $R$ be an infinite, coinfinite, computable set. There is a
  non-computable c.e.\ subset of $R$, call this set $K_R$.  There is a
  non-computable c.e.\ subset of $\overline{R}$, call this set
  $K_{\overline{R}}$.  Let $A = K_R \sqcup K_{\overline{R}}$.  Then
  $K_R \sqcup K_{\overline{R}}$ is a non-trivial split of $A$.
  $R- A = R-K_R $ is not c.e.\ but $R-K_{\overline{R}} = R$ is a c.e.\
  set. So this split is not Friedberg.  Please note that the set $A$
  and its non-trivial non-Friedberg split are built simultaneously.

  See Theorem~\ref{Shav} for more examples of sets with non-trivial
  non-Friedberg Splits.  There are published examples of sets with
  non-trivial non-Friedberg splits.  In Section~3.2 of
  \citet*{MR3436364}, a number of such sets are constructed.  But,
  like in the construction in the above paragraph and
  Theorem~\ref{Shav}, for the examples in \cite*{MR3436364} the set
  $A$ and its non-trivial non-Friedberg split are built
  simultaneously.

\subsection{There are c.e.\ sets without non-trivial non-Friedberg
  Splits}

For this we need the following definitions:

\begin{definition}
    \begin{enumerate}
      \item $\mathcal{D}(A) = \{ B | B-A $ is a c.e.\ set$\}$.
      \item $W$ is \emph{complemented} modulo
        $\mathcal{D}(A)$ iff there is a c.e.\ $Y$ such that
        $W \cup Y \cup A = \mathbb{N}$ and $(W \cap Y) - A$ is a c.e.\
        set.
      \item $A$ is \emph{$\mathcal{D}$-hhsimple} iff, for every c.e.\
        $W$, 
        $W$ is complemented modulo $\mathcal{D}(A)$.
      \item A c.e.\ set $W$ is $0$ modulo $\mathcal{D}(A)$ iff
        $W \in \mathcal{D}(A)$.
      \item A c.e.\ set $W$ is $1$ modulo $\mathcal{D}(A)$ iff there
        is a $Y$ such that $Y \cap A =\emptyset$ and
        $W \cup Y \cup A = \mathbb{N}$. 
      \item A non-computable set $A$ is \emph{$\mathcal{D}$-maximal}
        iff for every $W$, 
        $W$ is complemented modulo $\mathcal{D}(A)$ and either $0$ or
        $1$ modulo $\mathcal{D}(A)$.
    \end{enumerate}
  \end{definition}

  Assume $W$ is $0$ modulo $\mathcal{D}(A)$.  WLOG we can assume
  $W \cap A = \emptyset$. Then $W \cup \mathbb{N} \cup A = \mathbb{N}$
  and $\mathbb{N} \cap W = W$ is disjoint from $A$. So $W$ is
  complemented modulo $\mathcal{D}(A)$.  If $W-A$ is not c.e.\ then
  $W$ is not $0$ modulo $\mathcal{D}(A)$.  A c.e.\ set $W$ is $0$
  modulo $\mathcal{D}(A)$ iff $W-A$ is a c.e.\ set. The set $W$ is $1$
  modulo $\mathcal{D}(A)$ as witnessed by $Y$ iff $W$ is complemented
  by $Y$ modulo $\mathcal{D}(A)$ and $Y$ is $0$ modulo
  $\mathcal{D}(A)$.  We will not go through the details but the
  property of a set $A$ being $\mathcal{D}$-maximal is definable in
  the c.e.\ sets, $\mathcal{E}$.

 \begin{lemma}[Cholak, Downey, Herrmann]
   All non-trivial splits of a $\mathcal{D}$-maximal set $A$ are
   Friedberg.
 \end{lemma}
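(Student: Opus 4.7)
The plan is to argue by contradiction. Suppose $A = A_0 \sqcup A_1$ is a non-trivial split of the $\mathcal{D}$-maximal set $A$ that fails to be Friedberg: then there is a c.e.\ set $W$ with $W-A$ not c.e.\ while, without loss of generality, $W-A_0$ is c.e. Writing $X = W - A_0$, the target is to exhibit a c.e.\ set equal to $\overline{A_0}$, which, together with $A_0$ itself being c.e., forces $A_0$ to be computable and contradicts non-triviality.

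The first step is to apply $\mathcal{D}$-maximality not to $W$ but to the c.e.\ set $X$. Since $A_0 \subseteq A$, we have $X - A = (W - A_0) \cap \overline{A} = W \cap \overline{A} = W - A$, which is not c.e.; hence $X \notin \mathcal{D}(A)$, so $X$ is not $0$ modulo $\mathcal{D}(A)$. By $\mathcal{D}$-maximality, $X$ must be $1$ modulo $\mathcal{D}(A)$, yielding a c.e.\ set $Y$ with $Y \cap A = \emptyset$ and $X \cup Y \cup A = \mathbb{N}$; in particular $\overline{A} \subseteq X \cup Y$. Now set $Z = X \cup A_1 \cup Y$, which is c.e. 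Using $\mathbb{N} = A_0 \sqcup A_1 \sqcup \overline{A}$ and $\overline{A} \subseteq X \cup Y$, we have $Z \supseteq A_1 \cup \overline{A} = \overline{A_0}$. Conversely, each of $X$, $A_1$, and $Y$ is disjoint from $A_0$ (by the definition of $X$, by the split, and because $Y \cap A = \emptyset$), so $Z \cap A_0 = \emptyset$. Therefore $Z = \overline{A_0}$, which makes $A_0$ computable and delivers the required contradiction.

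The step I expect to be easiest to misplay is the choice of set on which to invoke the $0/1$ dichotomy: applying $\mathcal{D}$-maximality to $W$ itself produces a complement $Y$ for $W$ that does not directly conflict with anything, since the hypothesis is really about $W - A_0$ rather than $W$. Passing to $X = W - A_0$ is what converts the assumption ``$W - A_0$ is c.e.'' into a usable c.e.\ witness; once $Y$ is in hand, the set $X \cup A_1 \cup Y$ almost builds itself as a c.e.\ enumeration of $\overline{A_0}$.
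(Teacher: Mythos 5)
Your proof is correct and essentially matches the paper's: the paper obtains the witness $Y$ by applying $\mathcal{D}$-maximality to $W \cup A$ rather than to $X = W - A_0$ (equivalent, since $X \cup A = W \cup A$), and then, just as you do, exhibits $(W - A_0) \cup A_1 \cup Y$ as a c.e.\ set equal to $\overline{A_0}$, forcing $A_0$ to be computable.
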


 \begin{proof}

   Let $A_0 \sqcup A_1 = A$ be a non-trivial split of $A$.  Assume
   that $W -A$ is not a c.e.\ set. So $W\cup A$ is $1$ modulo
   $\mathcal{D}(A)$.  Then, for some $Y$,
   $W \cup A \cup Y = \mathbb{N}$ and $Y \cap A = \emptyset$.  If
   $W-A_0$ is c.e.\ then
   $A_0 \sqcup \big((W - A_0) \cup A_1 \cup Y\big) = \mathbb{N}$ and
   hence $A_0$ is computable.  Contradiction.
 \end{proof}

 This result and the above proof explicitly appears in an earlier
 unpublished version of \citet*{MR3436364} but not in the published
 version.  It was first implicitly mentioned in \citet*{MR1807845}.
 It follows a similar result about maximal sets in
 \citet{Downey.Stob:92}.


 \subsection{The Herrmann and Kummer Splitting Theorem}

 Shortly we will need the following theorem.\footnote{The Herrmann and
   Kummer Splitting Theorem appears, in a very different form, in
   \citet{Herrman.Kummer:94}.  This theorem appears in the only if
   direction of the proof of Theorem~2.4 of \citet{Herrman.Kummer:94}
   starting on page 63 from the first full paragraph on that page.  It
   is interesting enough to be isolated in its own right as a
   theorem.}


 \begin{theorem}[Herrmann and Kummer Splitting Theorem] 
   Let $A$ and $B$ be c.e.\ sets such that $A \subseteq B$ and $B$ is
   non-complemented modulo $\mathcal{D}(A)$.  Then there are $B_0$ and
   $B_1$ such that $B_i$ is non-complemented modulo $\mathcal{D}(A)$
   and $B_0 \sqcup B_1 = B$.
\end{theorem}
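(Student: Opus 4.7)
My plan is a finite-injury priority construction of the split $B = B_0 \sqcup B_1$. The requirements, for $i \in \{0,1\}$ and $e, j \in \mathbb{N}$, are
\[\R^i_{e,j}:\ \neg\bigl(W_e \cup B_i \cup A = \mathbb{N} \text{ and } W_j = (W_e \cap B_i) - A\bigr);\]
meeting every $\R^i_{e,j}$ forces both $B_0$ and $B_1$ to be non-complemented modulo $\D(A)$.

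The algebraic driver is the following. Suppose $B_0$ were complemented modulo $\D(A)$ by $(W_e, W_j)$. From $W_e \cup B_0 \cup A = \mathbb{N}$ we get $\overline{B_0} \cap \overline{A} \subseteq W_e$, hence $B_1 \cap \overline{A} \subseteq W_e$ and $(W_e \cap B_1) - A = B_1 - A$. Therefore
\[(W_e \cap B) - A = W_j \cup (B_1 - A).\]
Also $W_e \cup B \cup A = \mathbb{N}$ since $A \subseteq B$. The hypothesis that $B$ is non-complemented modulo $\D(A)$ then forces the right-hand side above to be non-c.e., so $(B_1 - A) \setminus W_j$ is infinite (otherwise $W_j \cup (B_1 - A)$ would be the union of $W_j$ with a finite set, hence c.e.). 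Each such element lies in $W_e \cap B \setminus (A \cup W_j)$ and was placed into $B_1$ by the construction; had any single one instead been placed into $B_0$, it would give $x \in (W_e \cap B_0) - A \setminus W_j$, diagonalizing $\R^0_{e,j}$. A symmetric statement handles $\R^1_{e,j}$.

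The construction is then the expected one. Each $\R^i_{e,j}$, while unmet, monitors candidate elements: when an element $b$ is seen in $W_{e,s} \setminus (W_{j,s} \cup A_s)$ and is available for assignment, the highest-priority claiming requirement places $b$ into $B_i$. To handle the timing subtlety that $b$ may enter $B$ before it enters $W_e$---so that at the moment $b$ enters $B$ no requirement yet recognizes $b$ as a candidate---I would use a standard lazy-assignment device: each newly-enumerated $b \in B$ is held in a queue and its side is decided only after a bounded number of stages, giving requirements a window in which to claim $b$ once its status in $W_e, W_j, A$ becomes visible. A finite-injury induction on priority then shows that each $\R^i_{e,j}$ is met, since, after higher-priority requirements have settled at some finite stage, the infinite stream of true witnesses guaranteed by the algebraic observation provides at least one that slips through to the side $\R^i_{e,j}$ wants and remains a true witness at the limit. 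The main obstacle is precisely this interplay between lazy queueing and priority injury, but this can be managed by standard bookkeeping.
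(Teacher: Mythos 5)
Your requirements and overall plan (split $B$ as balls enter it, finite-injury priority) match the paper, and your algebraic observation is correct and worth recording: if $(W_e,W_j)$ complements $B_0$ modulo $\mathcal{D}(A)$ then $(W_e\cap B)-A = W_j\cup(B_1-A)$ is not c.e.\ (since $W_e\cup B\cup A=\mathbb{N}$ and $B$ is non-complemented), so $(B_1-A)\setminus W_j$ is infinite. But the construction you sketch to exploit this has a genuine gap. The ``lazy-assignment with a bounded window'' cannot work: there is no computable bound on how long you must wait before a ball $b\in B$ shows up in $W_e$, and $b\notin W_j$, $b\notin A$ are $\Pi^0_1$ facts that are never confirmable. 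Since $B_0,B_1$ must be c.e., you have to commit $b$ to a side at some computable stage, and at that stage none of the information that would make $b$ a recognized witness need be available. Consequently your verification step --- ``the infinite stream of true witnesses provides at least one that slips through'' --- does not follow; it is entirely consistent with a bounded-window construction that \emph{every} true witness becomes visible only after its window has closed, in which case nothing is ever placed into $B_0$ on behalf of $\mathcal{R}^0_{e,j}$.

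The paper avoids witness-recognition altogether. It attaches to each $\mathcal{R}_{e,j,i}$ a length-of-agreement function $l(e,j,i,s)$ (least apparent disagreement between $Z_j$ and $(B_i\cap Y_e)-A$, or least point not yet covered by $B_i\cup A\cup Y_e$) and a cumulative restraint $r(e,j,i,s)$. When a ball $x$ enters $B$ it is \emph{immediately} assigned to $B_i$ for the highest-priority triple with $x\le r(e,j,i,s)$. If $\mathcal{R}_{e,j,0}$ is never met, $r(e,j,0,s)\to\infty$, and after lower-priority restraints stabilize this forces $B_1$ to be computable. The verification then uses an identity different from yours: $Y_e\cap\overline{B_1}$ and $Z_j$ complement $B$ modulo $\mathcal{D}(A)$, contradicting the hypothesis. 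Note that your identity alone would not finish this step even granted $B_1$ computable, since $B_1-A=B_1\cap\overline{A}$ is only co-c.e. To repair your proof you would need to replace the bounded queue with an unbounded restraint mechanism of this kind and replace the ``one slips through'' claim with the computability-of-$B_1$ argument.
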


\begin{proof}
  As balls $x$ enter $B$ they will be enumerated into either $B_0$ or
  $B_1$.  So $B = B_0 \sqcup B_1$. Let $Y_e, Z_j$ be two listings of
  all c.e.\ sets. We need to meet the requirements:
  \begin{equation}\tag*{$\mathcal{R}_{e,j,i}$:}
    \text{either } B_i \cup A \cup Y_e \neq \mathbb{N} \text { or }
    (B_i \cap Y_e) - A \neq Z_j.  
  \end{equation}
  If we fail to meet this requirement then $Y_e$ and $Z_j$ witness
  that $B_i$ is complemented modulo $\mathcal{D}(A)$.

  We need a \emph{disagreement} function.  Let $l(e,j,i,s)$ be the
  least $x \leq s$ such that either
  $x \notin B_{i,s} \cup A_s \cup Y_{e,s}$, or
  $x\in ( (B_{i,s} \cap Y_{e,s}) - A_s)$ iff $x\notin Z_{j,s}$.  
  If $x$ does not exist, let $l(e,j,i,s) = s$. The $\lim_s l(e,j,i,s)$
  exists iff we will have meet $\mathcal{R}_{e,j,i}$. 

  We will use $l$ to define a \emph{restraint} function,
  $r(e,j,i,-1) = \langle e , j, i \rangle$ and $r(e,j,i,s)$ is the max
  of $r(e,j,i,s-1)$ and $l(e,j,i,s)$. Again, the $\lim_s r(e,j,i,s)$
  exists iff we will have meet $\mathcal{R}_{e,j,i}$.  Moreover
  $r(e,j,i,s)$ is a non-decreasing function in $s$.

  When a ball $x$ enters $B$ at stage $s$ find the least
  $\langle e , j, i \rangle$ such that $x \leq r(e,j,i,s)$ and add $x$
  to $B_i$.

  Let $\langle e , j, i \rangle$ be the least triple such that
  $\lim_s r(e,j,i,s)$ does not exist. Let $x$ be such that for all
  $\langle e' , j', i' \rangle < \langle e , j, i \rangle$,
  $\lim_s l(e',j',i',s) < x$. Assume $i=0$. Then $B_1$ is computable
  (for all $y > x$, after $r(e,j,0,s)>y$, $y$ cannot enter $B_1$),
  $Y_e$ and $Z_j$ witness that $B_0$ is complemented modulo
  $\mathcal{D}(A)$. Now $Y = Y_e \cap \overline{B_1}$ and $Z_j$
  witness that $B$ is complemented modulo $\mathcal{D}(A)$.
  Contradiction.  Similarly if $i = 1$.
\end{proof}

This construction is uniform.  Given an index for $B$ we can uniformly
get a split of $B$ via the above theorem.  Assume $B$ is $0$ modulo
$\mathcal{D}(A)$ witnessed by the c.e.\ set $Z = B -A$.  Then
$\mathbb{N}$ and $Z$ witness that $B$ and any splits of $B$ are
complemented modulo $\mathcal{D}(A)$.  Let $e'$ and $j'$ be the least
such that $Y_{e'} = \mathbb{N}$ and $Z_{j'} = Z$, and
$l(e',j',i,s) = s$ (this last item just takes playing a little with
the enumeration of these sets).  For some $e \leq e', j \leq j'$ and
$i$, $\lim_s r(e,j,i,s)$ does not exist and the argument above shows
that the split is trivial. So if $B \subseteq A$ this split will be
trivial.  So this theorem does not give rise to a splitting
procedure.

If $B$ is not complemented modulo $\mathcal{D}(A)$ then it is open if
the above split (as given above) is always Friedberg.  We conjecture
yes with the following evidence: We can combine the requirements
$\mathcal{P}$ from the proof of Theorem~\ref{Fried} with the one here
to force the split to be a Friedberg split.

We also want to point out that the Herrmann and Kummer Splitting
Theorem is very similar to the Owings Splitting Theorem.  $B$ is
\emph{non complemented modulo $A$} iff $B-A$ is not co-c.e.\ iff
$\overline{B} \cup A$ is not c.e.  The following theorem is an easy
corollary of the Owings Splitting Theorem, \cite{Owings:67}.  Also
see \citet[X.2.5]{Soare:87}.

\begin{theorem}[Owings]
  Let $A$ and $B$ be c.e.\ sets such that $A \subseteq B$ and $B$ is
  non-complemented modulo $A$. Then there are $B_0$ and $B_1$ such
  that $B_i$ is non-complemented modulo $A$ and $B_0 \sqcup B_1 = B$.
\end{theorem}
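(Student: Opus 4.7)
The plan is to mimic the Herrmann--Kummer proof above almost line for line; the one simplification is that being complemented modulo $A$ (rather than modulo $\mathcal{D}(A)$) requires only a single c.e.\ witness, so the requirements are indexed by pairs $\langle e,i\rangle$ instead of triples. Unpacking the definition, $B_i$ is non-complemented modulo $A$ iff, for every c.e.\ $W_e$, either $W_e \cup B_i \cup A \neq \mathbb{N}$ or $(W_e \cap B_i) - A \neq \emptyset$; these become the requirements $\mathcal{R}_{e,i}$.

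As before, I would build $B_0,B_1$ by assigning each ball entering $B$ to one side. Define $l(e,i,s)$ to be the least $x \leq s$ such that either $x \notin W_{e,s}\cup B_{i,s}\cup A_s$ or $x \in (W_{e,s} \cap B_{i,s})- A_s$, and $l(e,i,s)=s$ otherwise. Set $r(e,i,-1)=\langle e,i\rangle$ and $r(e,i,s)=\max(r(e,i,s-1),l(e,i,s))$. When $x$ enters $B$ at stage $s$, enumerate $x$ into $B_i$ for the least $\langle e,i\rangle$ with $x\leq r(e,i,s)$. The key point is that if $\mathcal{R}_{e,i}$ fails then each apparent $l$-witness disappears at a finite stage---type-1 witnesses when $x$ enters $W_e\cup B_i\cup A$, type-2 witnesses when $x$ enters $A$---so $\lim_s l(e,i,s)=\lim_s r(e,i,s)=\infty$.

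Taking $\langle e,i\rangle$ to be the least pair with $\lim_s r(e,i,s)=\infty$ and choosing $x$ larger than the finite limits $\lim_s r(e',i',s)$ for all $\langle e',i'\rangle <\langle e,i\rangle$, the standard priority argument gives that $B_{1-i}$ is computable: for $y>x$ we find a stage $s$ with $r(e,i,s)>y$, after which $y$ can no longer be placed into $B_{1-i}$, and membership of $y$ in $B_{1-i}$ is decided by inspection at stage $s$. The main obstacle, and the only place the hypothesis is really used, is to derive a contradiction from $B_{1-i}$ being computable. Supposing $i=0$, I would verify that $W := W_e - B_1$ (c.e.\ since $B_1$ is computable) witnesses that $B$ itself is complemented modulo $A$: from $W_e \cup B_0 \cup A = \mathbb{N}$ we get $W \cup B \cup A \supseteq W_e \cup B_0 \cup A = \mathbb{N}$, and $W \cap B = (W_e - B_1)\cap(B_0 \sqcup B_1) = W_e \cap B_0 \subseteq A$. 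This contradicts the hypothesis, and the case $i=1$ is symmetric.
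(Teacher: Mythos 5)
Your argument is correct. Note, though, that the paper does not supply its own proof of the Owings Splitting Theorem---it says ``We are not going to provide a proof'' and refers to Soare X.2.5---so there is no in-paper proof to compare against. What you have written is the natural transport of the paper's proof of the Herrmann--Kummer Splitting Theorem to the Owings setting, and it is sound. Your unpacking of the definition is the right one: $B_i$ is complemented modulo $A$ iff there is a c.e.\ $W$ with $W\cup B_i\cup A=\mathbb N$ and $W\cap B_i\subseteq A$, and this is equivalent to the paper's definition that $\overline{B_i}\cup A$ is c.e.\ (take $W=\overline{B_i}\cup A$ for one direction; for the other, $W\cup A=\overline{B_i}\cup A$). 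The requirements, the disagreement and restraint functions, and the priority assignment are the Herrmann--Kummer ones with the $Z_j$-index dropped, since ``$(W\cap B_i)-A$ is c.e.'' collapses to ``$(W\cap B_i)-A=\emptyset$'' and so needs no separate enumeration to match. Your concluding contradiction---if $\mathcal R_{e,i}$ fails then $B_{1-i}$ is computable and $W_e-B_{1-i}$ witnesses that $B$ is complemented modulo $A$---is correct and exactly mirrors the paper's use of $Y_e\cap\overline{B_1}$ in the Herrmann--Kummer proof. As the paper's later remark on $\Sigma^0_3$ ideals indicates, Owings is Herrmann--Kummer with $\mathcal D(A)$ replaced by $\mathcal S(A)$, and your proof makes that observation concrete at the level of the finite-injury construction.
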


We are not going to provide a proof. The standard proof is
\citet[X.2.5]{Soare:87}.  What is not clear is whether this standard proof
always provides a Friedberg split and, if $B \subseteq A$, whether the
resulting split is non-trivial. We can arrange the enumeration (let
$W_0 = \mathbb{N}$) such that if $B \subseteq A$ then the resulting
split is non-trivial. But it is open what occurs when we use the standard
enumeration. So it is unknown if the Owings Splitting Theorem gives a
splitting procedure. 

The Owings and the Herrmann and Kummer Splitting theorems are like
Friedberg's in that all three are uniform, but unlike Friedberg's in
that they do not necessarily provide non-trivial splits when
possible. Herrmann and Kummer Splitting Theorem does not give rise to
a splitting procedure.  It is open if the Ownings Splitting Theorem
gives rise to a splitting procedure.  Friedberg Splitting Theorem
does give rise to a splitting procedure.

There is one more (little) known splitting theorem, \citet{MR1780060},
which extends all three of the splitting theorems above discussed in
this subsection.  Let $\mathcal{E}$ be the collection of c.e.\ sets
with inclusion, intersection, union, $\emptyset$ and $\mathbb{N}$;
this is called the lattice of c.e.\ sets.  An \emph{ideal} of
$\mathcal{E}$ is a collection of sets $\mathcal{I}$ such that
$\emptyset \in \mathcal{I}$ and $\mathcal{I}$ is closed under subset
and inclusion.  An ideal $\mathcal{I}$ is $\Sigma^0_3$ if the relation
$W_e \in \mathcal{I}$ is $\Sigma^0_3$.  $\mathcal{F}$, collection of
all finite sets, is an $\Sigma^0_3$ ideal.  For any $A$, so are
$\mathcal{S}(A) = \{ B | B \subseteq A\}$ and $\mathcal{D}(A)$.  $W$
is \emph{complemented modulo $\mathcal{I}$} iff there is a $Y$ such
that $W \cup Y = \mathbb{N}$ and $W \cap Y$ is in $\mathcal{I}$.  For
any $A$, the Friedberg, Ownings, and Herrmann and Kummer Splitting
Theorems, respectively, imply any $B$ which is non-complemented modulo
$\mathcal{F}$, $\mathcal{S}(A)$, or $\mathcal{D}(A)$ can be split into
$B_0$ and $B_1$ such that each $B_i$ is non-complemented modulo
$\mathcal{F}$, $\mathcal{S}(A)$, or $\mathcal{D}(A)$.

\begin{theorem}[\citet{MR1780060} ]
  Let $\mathcal{I}$ be any $\Sigma^0_3$ ideal.  If $B$ is
  non-complemented modulo $\mathcal{I}$ then $B$ can be split into
  $B_0$ and $B_1$ such that each $B_i$ is non-complemented modulo
  $\mathcal{I}$.
\end{theorem}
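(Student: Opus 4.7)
The plan is to generalize the Herrmann and Kummer construction to an arbitrary $\Sigma^0_3$ ideal. Since $\mathcal{I}$ is $\Sigma^0_3$, fix a computable predicate $Q(e, j, x, s)$, monotone nondecreasing in $s$, such that $W_e \in \mathcal{I}$ iff $\exists j\, \forall x\, \exists s\, Q(e, j, x, s)$; let $P(e,j) := \forall x\, \exists s\, Q(e,j,x,s)$, so $\neg P(e, j) \Leftrightarrow \exists x\, \forall s\, \neg Q(e, j, x, s)$. By Kleene's recursion theorem with parameters, we obtain, during the construction of $B_0, B_1$, uniformly computable indices $n(i, e)$ for $B_i \cap Y_e$, where $(Y_e)$ lists the c.e.\ sets. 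The requirements are
\begin{equation*}
\mathcal{R}_{e, j, i}:\quad B_i \cup Y_e \neq \mathbb{N} \quad \text{or} \quad \neg P(n(i, e), j).
\end{equation*}
Meeting every $\mathcal{R}_{e,j,i}$ suffices: if some c.e.\ $Y_e$ complemented $B_i$ modulo $\mathcal{I}$, then $B_i \cup Y_e = \mathbb{N}$ would force $\forall j\, \neg P(n(i,e), j)$, contradicting $B_i \cap Y_e \in \mathcal{I}$.

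Define the disagreement function $l(e, j, i, s)$ to be the least $x \leq s$ such that $x \notin B_{i,s} \cup Y_{e,s}$ or $\neg Q(n(i, e), j, x, s)$, taking $l(e, j, i, s) = s$ if no such $x$ exists. Let $r(e, j, i, s) = \max(r(e, j, i, s-1), l(e, j, i, s))$ with initial value $r(e,j,i,-1) = \langle e,j,i\rangle$. Construction: when a ball $x$ enters $B$ at stage $s$, put $x$ into $B_i$ for the least triple $\langle e, j, i\rangle$ with $x \leq r(e, j, i, s)$; default to $B_0$ if no such triple exists. The monotonicity of $Q$ is what makes the disagreement test work: if $l(e,j,i,s)$ has a finite limit $x^*$, then for large $s$ either $x^* \notin B_{i,s} \cup Y_{e,s}$ forever (giving $B_i \cup Y_e \neq \mathbb{N}$) or $\neg Q(n(i,e), j, x^*, s)$ holds cofinally, which by monotonicity means for all $s$, witnessing $\neg P(n(i,e), j)$.

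Suppose for contradiction some requirement fails, and let $\langle e^*, j^*, i^*\rangle$ be the least failing triple. Then $B_{i^*} \cup Y_{e^*} = \mathbb{N}$ and $P(n(i^*, e^*), j^*)$, so every $x$ eventually enters $B_{i^*, s} \cup Y_{e^*, s}$ and eventually satisfies $Q(n(i^*, e^*), j^*, x, s)$; hence $l(e^*, j^*, i^*, s) \to \infty$ and thus $r(e^*, j^*, i^*, s) \to \infty$. Each lower triple is met, so its $r$ has a finite limit; let $m$ be an upper bound for these limits. For $y > m$, pick the first stage $s_y$ with $r(e^*, j^*, i^*, s_y) \geq y$: from $s_y$ onwards any enumeration of $y$ into $B$ routes $y$ to $B_{i^*}$, since the lower triples have restraint $< y$ and $\langle e^*,j^*,i^*\rangle$ is the first triple that captures $y$. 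Hence $y \in B_{1-i^*}$ iff $y \in B_{1-i^*, s_y}$, showing $B_{1-i^*}$ is computable.

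Now let $Y' := Y_{e^*} \setminus B_{1-i^*}$, which is c.e.\ because $B_{1-i^*}$ is computable. Then $B \cup Y' = B_{i^*} \cup B_{1-i^*} \cup (Y_{e^*} \setminus B_{1-i^*}) = B_{i^*} \cup Y_{e^*} = \mathbb{N}$, while $B \cap Y' = B_{i^*} \cap Y_{e^*} \cap \overline{B_{1-i^*}} \subseteq B_{i^*} \cap Y_{e^*} \in \mathcal{I}$, so by closure of $\mathcal{I}$ under subsets, $B$ is complemented modulo $\mathcal{I}$, contradicting the hypothesis. The main technical hurdle is the initial normalization: one needs the monotone computable kernel $Q$ so that the $\Pi^0_2$ assertion $\neg P$ can be detected by a single persistent disagreement at some $x$, exactly mirroring the role played by $(B_i \cap Y_e) - A \neq Z_j$ in the Herrmann-Kummer proof.
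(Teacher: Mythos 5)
The paper does not actually prove this theorem: it cites Hammond and remarks explicitly that, unlike the Friedberg, Owings, and Herrmann--Kummer splitting theorems, ``the proof is not finite injury.'' Your proposal is a finite-injury argument that directly transplants the Herrmann--Kummer construction, replacing the concrete $\Pi^0_2$ witness test ``$(B_i \cap Y_e) - A = Z_j$'' with the abstract $\Pi^0_2$ kernel $\forall x\, \exists s\, Q(n(i,e),j,x,s)$ extracted from the $\Sigma^0_3$ normalization of $\mathcal{I}$. So you are taking a genuinely different route from the one the paper attributes to Hammond.

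Having checked the argument, I do not see a gap. The crucial points all work out. (i) The normalization with $Q$ monotone nondecreasing in $s$ is achievable by passing to $\exists s' \leq s\, Q(e,j,x,s')$, and monotonicity is exactly what lets a single disagreement witness $x$ do double duty: both tests ``$x \notin B_{i,s}\cup Y_{e,s}$'' and ``$\neg Q(n(i,e),j,x,s)$'' are monotone (nonincreasing) in $s$, so $\forall s\,[\text{test}_1(x,s) \vee \text{test}_2(x,s)]$ is equivalent to $[\forall s\,\text{test}_1(x,s)] \vee [\forall s\,\text{test}_2(x,s)]$; this is why $l(e,j,i,\cdot)$ is bounded iff $\mathcal{R}_{e,j,i}$ is met. (ii) The double recursion theorem (or recursion theorem with parameters) supplies $n(i,e)$ for $B_i \cap Y_e$; the extensionality of ``$W_e \in \mathcal{I}$'' means it does not matter that the recursion-theorem enumeration of $W_{n(i,e)}$ may differ from the construction's enumeration. (iii) The contradiction at the least failing triple reproduces the Herrmann--Kummer finale: $B_{1-i^*}$ is computable via the restraints, $Y' = Y_{e^*}\setminus B_{1-i^*}$ is c.e., $B \cup Y' = \mathbb{N}$, and $B \cap Y' \subseteq B_{i^*}\cap Y_{e^*} \in \mathcal{I}$, so downward closure of the ideal gives the contradiction. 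Note the only closure property of $\mathcal{I}$ you use is closure under c.e.\ subsets; closure under union is never invoked, so your argument applies a bit more broadly than stated.

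What each approach buys: yours is arguably more elementary, staying at the finite-injury level and making the Herrmann--Kummer machinery do all the work once the $\Sigma^0_3$ predicate has been put in monotone $\exists\forall\exists$ form. The paper indicates Hammond's method is not finite injury (and is uniform in $\mathcal{I}$); your construction is also uniform in a $\Sigma^0_3$ index for $\mathcal{I}$ (the normalization and the recursion theorem are both effective), so you do not appear to lose uniformity either. Given that the paper explicitly flags the absence of a finite-injury proof, the one potential worry is whether something subtle is being overlooked; I went looking for such a subtlety (in the normalization, the use of the same witness variable $x$ for both tests, and the recursion-theorem bookkeeping) and did not find one, but you should say a word in the write-up about why the monotonicity of $Q$ is essential to collapsing the two disagreement conditions into one, since that is the only place where the generalization is not purely notational.
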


We will not include a proof here.  Unlike the other three splitting
theorems discussed here the proof is not finite injury.  It is uniform
in $\mathcal{I}$. Since $\mathcal{I}$ can equal $\mathcal{D}(A)$, it
does not always give raise to a splitting procedure. What happens when
$\mathcal{I}$ is $\mathcal{S}(A)$ is open.

\subsection{Shavrukov's Result} \label{Shavrukov}

First we need to use the Herrmann and Kummer Splitting Theorem for the
following corollary.  The proof is not uniform.

\begin{corollary}
  For all non-computable non-$\mathcal{D}$-maximal $A$, there are
  disjoint $X_0$ and $X_1$ such that $X_i -A$ is not c.e.\ and
  $A \subseteq X_0 \sqcup X_1$.
\end{corollary}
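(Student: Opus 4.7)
My plan is to case-split on whether some c.e.\ set is non-complemented modulo $\mathcal{D}(A)$; this split is the source of the non-uniformity, as we cannot effectively decide which case we are in.

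In the first case, where some $W$ is non-complemented modulo $\mathcal{D}(A)$, I would set $B := W \cup A$. A short check shows that $B$ inherits non-complementation from $W$: any hypothetical complement $Y'$ of $B$ would satisfy $W \cup Y' \cup A = \mathbb{N}$ and $(B \cap Y') - A = (W \cap Y') - A$, which would then complement $W$ too. Because $A \subseteq B$, the Herrmann--Kummer Splitting Theorem applies and returns a c.e.\ split $B = B_0 \sqcup B_1$ with each $B_i$ non-complemented modulo $\mathcal{D}(A)$. Instantiating non-complementation at $Y' = \mathbb{N}$ forces each $B_i - A$ to be non-c.e., so $X_i := B_i$ completes the case.

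In the second case every c.e.\ set is complemented modulo $\mathcal{D}(A)$, but the non-$\mathcal{D}$-maximality of $A$ supplies a $W$, complemented by some $Y$, that is neither $0$ nor $1$ modulo $\mathcal{D}(A)$; one checks $Y - A$ must also be non-c.e., else $Y - A$ would witness that $W$ is $1$ modulo $\mathcal{D}(A)$. The key step here is to isolate disjoint c.e.\ sets $U := W \backslash Y$ and $V := Y \backslash W$ whose differences with $A$ are non-c.e. This rests on the computation
\[
(Y \searrow W) - A = (Y \searrow W) \cap \bigl((W \cap Y) - A\bigr),
\]
a c.e.\ intersection (since $Y \searrow W \subseteq W \cap Y$ and $(W \cap Y) - A$ is c.e.\ by complementation of $W$). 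Combined with the decomposition $W = U \sqcup (Y \searrow W)$, this shows $U - A$ must be non-c.e.; symmetrically for $V - A$.

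With $U$ and $V$ in hand I would build $X_0$ and $X_1$ directly by a placement rule: each $x$ carries a label $p(x) \in \{0,1,?\}$, initially $?$; the first time $x$ enters $U$, $V$, or $A$, set $p(x)$ to $0$, $1$, or $0$ respectively, and otherwise leave $p(x)$ alone. Setting $X_i := \{x : p(x) = i\}$ yields c.e., disjoint sets with $A \subseteq X_0 \sqcup X_1$, because every $x \in A$ eventually gets $p(x) \neq {?}$. For $x \notin A$ the label depends only on $U$- or $V$-membership (and these are disjoint), so $X_0 - A = U - A$ and $X_1 - A = V - A$, both non-c.e. The main obstacle is the Case~2 analysis---using the enumeration-order notation to strip the c.e.\ noise $(Y \searrow W) - A$ off $W - A$ and thereby extract a non-c.e.\ piece $U - A$ that cohabits inside a genuinely c.e.\ set.
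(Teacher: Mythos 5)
Your proposal is correct and follows essentially the same route as the paper: case-split on whether $A$ is $\mathcal{D}$-hhsimple, apply the Herrmann--Kummer Splitting Theorem to $W \cup A$ in the non-hhsimple case, and use $W\backslash Y$ and $Y\backslash W$ in the hhsimple-but-not-maximal case. One simplification you missed: the placement-rule construction at the end of Case~2 is unnecessary, because $W \cup Y = \mathbb{N}$ already forces $U \sqcup V = (W\backslash Y) \sqcup (Y \backslash W) = \mathbb{N}$ (every ball enters one of $W$, $Y$ first), so $A \subseteq U \sqcup V$ is automatic and you can just take $X_0 = U$, $X_1 = V$.
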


\begin{proof}
  When $A$ is not $\mathcal{D}$-hhsimple there is a c.e.\ $X$ such
  that $A \subseteq X$ and $X$ is not complemented modulo
  $\mathcal{D}(A)$.  Apply the above Herrmann and Kummer Splitting
  Theorem to get $X_0 \sqcup X_1 = X$ where the $X_i$s are also not
  complemented modulo $\mathcal{D}(A)$.  If $X_i -A$ is c.e.\ then
  $X_i$ is $0$ and hence complemented modulo $\mathcal{D}(A)$.
  Therefore $X_i -A$ is not a c.e. set.

  Otherwise $A$ is $\mathcal{D}$-hhsimple but not
  $\mathcal{D}$-maximal.  So there must be a c.e.\ superset $W$ of $A$
  which is not $0$ or $1$.  So $W-A$ is not a c.e.\ set. There is a
  $Y$ such that $W \cup Y = \mathbb{N}$, $(W \cap Y )-A$ is c.e.\ but
  $Y-A$ is not a c.e.\ set.

  Let $X_0 = W \backslash Y$ and $X_1 = Y \backslash W$.  Now
  $W= X_0 \cup ( W \cap Y)$. So
  $W-A = (X_0 -A) \cup (( W \cap Y) - A)$. The set $ ( W \cap Y) - A$
  is known to be c.e., so if $X_0 - A$ is c.e.\ then so is $W-A$.
  Therefore $X_0 -A$ is not a c.e.\ set.  $Y= X_1 \cup ( W \cap
  Y)$.
  So $Y-A = (X_1 -A) \cup (( W \cap Y) - A)$. $ ( W \cap Y) - A$ is
  known to be c.e., so if $X_1 - A$ is c.e.\ then so is $Y-A$.
  Therefore $X_1 -A$ is not a c.e.\ set.
\end{proof}

\begin{theorem}[Shavrukov]\label{Shav}
  All c.e.\ non-computable non-$\mathcal{D}$-maximal sets $A$ have
  non-trivial non-Friedberg splits.
\end{theorem}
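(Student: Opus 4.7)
My plan is to feed the previous corollary directly into the definition of a split. Apply the corollary to the given non-computable, non-$\mathcal{D}$-maximal set $A$ to obtain disjoint c.e.\ sets $X_0, X_1$ with $A \subseteq X_0 \sqcup X_1$ and with neither $X_0 - A$ nor $X_1 - A$ c.e. Then set
\[
A_0 = A \cap X_0, \qquad A_1 = A \cap X_1.
\]
Both are c.e., they are disjoint because $X_0$ and $X_1$ are, and $A_0 \sqcup A_1 = A$ because $A$ is covered by $X_0 \sqcup X_1$. So this is indeed a split of $A$.

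For non-triviality, I would argue as follows. Suppose $A_0$ were computable. Then $X_0 \cap \overline{A_0}$ is c.e.\ (intersection of the c.e.\ set $X_0$ with the computable set $\overline{A_0}$). But $X_0 \cap \overline{A_0} = X_0 - (A \cap X_0) = X_0 - A$, and the corollary guarantees $X_0 - A$ is not c.e. Contradiction. The identical argument with indices swapped shows $A_1$ is not computable.

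For the non-Friedberg property, the witness is simply $W = X_1$. By the corollary, $W - A = X_1 - A$ is not c.e. On the other hand, $A_0 = A \cap X_0 \subseteq X_0$ is disjoint from $X_1$, so
\[
W - A_0 = X_1 - (A \cap X_0) = X_1,
\]
which is c.e. Thus $W$ demonstrates that the split fails to be Friedberg, completing the argument.

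No step looks like a real obstacle: everything is forced once the corollary supplies the pair $X_0, X_1$. The only place where one might momentarily worry is the non-triviality check, but the key observation $X_0 - A_0 = X_0 - A$ (using $A_0 = A \cap X_0$) immediately converts any hypothetical computability of $A_i$ into the c.e.-ness of $X_i - A$, which the corollary has already ruled out. The efficiency here is the reason the proof, unlike the corollary it rests on, needs no further construction.
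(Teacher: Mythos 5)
Your proof is correct and follows exactly the same route as the paper's: apply the corollary to obtain $X_0, X_1$, take $A_i = X_i \cap A$, show non-triviality by observing that computability of $A_i$ would make $X_i - A = X_i \cap \overline{A_i}$ c.e., and exhibit one of the $X_i$ as a witness that the split is non-Friedberg. The only cosmetic difference is that the paper uses $W = X_0$ where you use $W = X_1$, which is an immaterial relabeling.
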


  \begin{proof}
    By the above corollary, there are disjoint $X_0$ and $X_1$ such
    that $X_i -A$ is not c.e.\ and $A \subseteq X_0 \sqcup X_1$.  If
    $X_i \cap A$ were computable then
    $X_i -A = X_i \cap \overline{(X_i \cap A)}$ is c.e.  Therefore
    $X_0 \cap A, X_1 \cap A$ is a non-trivial split of $A$.  $X_0 - A$
    is not c.e.\ but $X_0 - (X_1 \cap A) = X_0$ is a c.e.\ set.  Hence
    $X_0 \cap A, X_1 \cap A$ is a non-trivial non-Friedberg split.
    \end{proof}

     \begin{corollary}[Shavrukov]
       All of $A$'s non-trivial splits are Friedberg iff $A$ is
       $\mathcal{D}$-maximal.
    \end{corollary}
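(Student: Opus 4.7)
The plan is to observe that this corollary falls out immediately from two results already established in the paper, so the proof is a direct citation of each direction with essentially no new work required. The biconditional is naturally restricted to non-computable $A$ (since $\mathcal{D}$-maximality presupposes non-computability); under that interpretation, the two directions line up perfectly with the Cholak--Downey--Herrmann Lemma and Shavrukov's Theorem~\ref{Shav}.

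For the $(\Leftarrow)$ direction, I would simply invoke the Cholak--Downey--Herrmann Lemma: assuming $A$ is $\mathcal{D}$-maximal, any non-trivial split $A_0 \sqcup A_1 = A$ is Friedberg, since that lemma rules out the existence of a $W$ with $W-A$ not c.e.\ but $W - A_i$ c.e.\ (it would force $A_i$ to be computable, contradicting non-triviality).

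For the $(\Rightarrow)$ direction, I would argue by contrapositive. Suppose $A$ is non-computable but \emph{not} $\mathcal{D}$-maximal. Then Theorem~\ref{Shav} provides a non-trivial split $X_0 \cap A \sqcup X_1 \cap A = A$ together with a c.e.\ set (namely one of the $X_i$) witnessing that this split is not Friedberg. Hence it is not the case that every non-trivial split of $A$ is Friedberg. Combining both directions yields the biconditional.

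There is no real obstacle: the substantive content has already been packaged into the two cited results, and the corollary is just their juxtaposition. The only small subtlety worth flagging is the status of computable $A$, for which the left-hand side holds vacuously (no non-trivial splits exist at all) while the right-hand side fails by definition; this is why the statement is most naturally read with the standing assumption that $A$ is non-computable, consistent with the hypothesis of Theorem~\ref{Shav}.
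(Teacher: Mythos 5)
Your proof is correct and matches the paper's intent exactly: the paper states this corollary without explicit proof, leaving it as an immediate consequence of the Cholak--Downey--Herrmann Lemma (for the $\Leftarrow$ direction) and Theorem~\ref{Shav} (for the $\Rightarrow$ direction by contrapositive), with the standing convention that $A$ is non-computable. Your flagging of the vacuous-truth issue for computable $A$ is a sensible clarification consistent with how the paper implicitly restricts to non-computable sets via the hypothesis of Theorem~\ref{Shav} and the definition of $\mathcal{D}$-maximality.
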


    Again we want to thank V.\ Yu.\ Shavrukov for allowing us to
    include his results.  The proof we presented here is very
    different than Shavrukov's, see \cite{Shav}. Shavrukov's proof used the fact that
    every $\mathcal{D}$-hhsimple is not a diagonal. For the
    definition of a diagonal set see \citet{Kummer:01} and
    \citet{Herrman.Kummer:94}.

    \section{Uniform non-trivial non-Friedberg Splits}

The question we will answer in this section follows:

 \begin{question}

   Is there a splitting procedure $h$ such that all
   non-$\mathcal{D}$-maximal sets $W_e$ are split by $h(e)$ into a
   non-trivial non-Friedberg split?
  \end{question}

The answer is no by the following theorem:

\begin{theorem} \label{nonuni} For every total computable $h$ there is
  an $e$ such that $W_e$ is not computable and
  $h(e) = \langle e_0, e_1 \rangle$ then either
    \begin{enumerate}
    \item $W_{e_0}, W_{e_1}$ is not a split of $W_e$,
    \item $W_{e_0} \sqcup W_{e_1}$ is a trivial split of $W_e$, or
    \item $W_{e_0} \sqcup W_{e_1}$ is a Friedberg split of $W_e$ and
      $W_e$ is not $\mathcal{D}$-maximal.
    \end{enumerate}
    Moreover given an index for $h$ we can effectively find $e$.
  \end{theorem}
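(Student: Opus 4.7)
Plan: Apply Kleene's Recursion Theorem with parameters to effectively find, from an index of $h$, an index $e$ such that $W_e$ is constructed by the procedure below with $h(e)=\langle e_0,e_1\rangle$ as a parameter. I build $W_e$ pursuing two goals at once. First, following the pattern of Section~\ref{sec:proviso-2:-there}, fix disjoint infinite computable sets $R_0,R_1$ (say, the evens and the odds) and aim to enumerate $W_e=A_0\sqcup A_1$ with $A_j\subseteq R_j$ and each $A_j$ non-computable; then $R_0$ witnesses non-$\D$-maximality of $W_e$ exactly as in Section~\ref{sec:proviso-2:-there}. Non-computability of the $A_j$ is obtained from simplicity-style requirements $\N_{j,k}$: if $W_k\cap R_j$ is infinite, then $W_k\cap A_j\neq\emptyset$. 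Second, I meet Friedberg-style requirements
\[
\P_{k,i}:\quad W_k\searrow W_e\text{ infinite }\Longrightarrow\ W_k\searrow W_{e_i}\text{ infinite,}
\]
for all $k$ and $i\in\{0,1\}$; by the Friedberg lemma, if all $\P_{k,i}$ hold then any valid non-trivial split $W_{e_0}\sqcup W_{e_1}=W_e$ is Friedberg.

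To meet each $\P_{k,i}$ I use a restraint strategy reacting to $h$'s actual placements. While the $W_k$-balls I have added to $W_e$ so far are being matched by growth in both $|W_k\cap W_{e_0,s}|$ and $|W_k\cap W_{e_1,s}|$, I keep adding more; once $h$ falls permanently behind for some side $i$, I forbid any further addition of $W_k$-balls to $W_e$, whether requested by $\P$, by $\N$, or by mirroring a preemptive $h$-move, so that $W_k\searrow W_e$ is finite and $\P_{k,i}$ is vacuously met. The $\N$'s and $\P$'s are interleaved by a standard finite-injury priority schedule; each $\N_{j,k}$ needs only one successful addition, and at the stage it acts may choose an element of $W_k\cap R_j$ not in any currently-restrained $W_{k'}$, so collisions between $\N$-demands and active restraints are finitary.

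The three-case verification is then direct. If $W_{e_0}\cup W_{e_1}\not\subseteq W_e$, or $W_{e_0}\cap W_{e_1}\neq\emptyset$, or $W_{e_0}\sqcup W_{e_1}\subsetneq W_e$, we are in case~(1); if one of $W_{e_0}, W_{e_1}$ is computable, case~(2); otherwise the split is valid and non-trivial, hence Friedberg by the $\P$'s, and $W_e$ is non-computable and non-$\D$-maximal by the $\N$'s, so case~(3) applies. Uniform effectivity of the construction in the index of $h$ gives the ``moreover'' clause. The main obstacle is coordinating the $\P$-restraints with the $\N$-demands, compounded by $h$'s possible preemptive enumerations into $W_{e_i}\setminus W_{e,s}$: such preemptive moves either force us to mirror them in $W_e$ (in which case they advance $\P$ counters in our favour) or leave $W_{e_i}\not\subseteq W_e$ and put us immediately in case~(1), so the finite-injury bookkeeping, while delicate, is standard.
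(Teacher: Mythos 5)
Your plan has a genuine gap, and the parity split exposes it. You fix $R_0=$ evens, $R_1=$ odds and aim to make $A_0\subseteq R_0$ and $A_1\subseteq R_1$ each non-computable via the $\N$'s, while using the $\P_{k,i}$'s to force $h$'s split to be Friedberg. But $h$ is an adversary: it may simply place every even ball of $W_e$ into $W_{e_0}$ and every odd ball into $W_{e_1}$, i.e.\ play exactly the $R_0/R_1$ split. If the $\N$'s succeed, this is a non-trivial split (both $A_0,A_1$ non-computable) and non-Friedberg ($R_0-W_e$ is not c.e.\ since $A_0$ is a non-computable c.e.\ subset of $R_0$, while $R_0-W_{e_1}=R_0$ is c.e.); so none of cases (1), (2), (3) applies, and the theorem fails for this $e$. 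If instead the $\P$-restraints engage, note that with $W_k=R_0$ the adversary never puts $R_0$-balls into $W_{e_1}$, so $\P_{R_0,1}$ eventually restrains all evens; symmetrically $\P_{R_1,0}$ restrains all odds; then $W_e$ is finite, hence computable, again contradicting the theorem's requirement that $W_e$ be non-computable. In short, the $\P$'s and the $\N$'s are flatly inconsistent against this $h$, and there is no priority ordering that saves both; this is not a matter of ``delicate bookkeeping.''

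The deeper problem is that ``$h$ falls permanently behind on side $i$'' is a $\Pi^0_2$ event: it cannot be detected at a finite stage, so a finite-injury restraint is the wrong tool. The paper's proof uses two ingredients your plan lacks. First, it makes $A$ \emph{maximal} (not merely simple) inside each piece $R_i$; maximality gives the crucial dichotomy that every split of $A$ restricted to $R_i$ is either trivial or Friedberg there, so you never need to ``steer'' $h$'s placements — you only need to determine which horn holds. Second, it runs the construction on a tree coding $\Pi^0_2$ questions (equivalently, with an oracle for INF), with a $\Delta^0_3$ sequence of pairwise disjoint computable $R_i$'s satisfying the covering property (Equations~\ref{eq:2} and~\ref{eq:3}); this lets it (a) detect the $\Sigma^0_3$ event ``$h$'s split is trivial inside some $R_i$'' and respond by dumping all other pieces into $A$ so that $A$ becomes computable outside $R_i$, making $h$'s split globally trivial while $A$ stays non-computable via maximality in $R_i$, and (b) in the complementary $\Pi^0_3$ case, globalize Friedbergness from the $R_i$'s. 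Your two fixed pieces and simplicity requirements provide neither the dichotomy nor the reactive collapse, so the construction cannot recover when $h$ aligns its split with your decomposition.

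Finally, the ``moreover'' (effectivity) clause in your plan is fine in spirit — the recursion theorem step is also how the paper gets an index — but it rests on a construction that, as above, does not actually meet its requirements.
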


  Hence if $h$ is a splitting procedure then Case (3) applies.
  Actually, Case (3) applies infinitely often.


  \begin{corollary}\label{sec:unif-nontr-non}
    Let $h$ be a splitting procedure.
    Then there is an infinite
    set $J$ of indices that, for all $e \in J$, $W_e$ has a
    non-Friedberg split but the split given by $h(e)$ is a Friedberg
    split.
\end{corollary}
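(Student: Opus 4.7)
The plan is to invoke Theorem~\ref{nonuni} for $h$ and let the three cases collapse. Since $h$ is a splitting procedure, for any non-computable $W_e$ the pair $h(e) = \langle e_0, e_1\rangle$ is by definition a non-trivial split of $W_e$: we automatically have $W_{e_0} \sqcup W_{e_1} = W_e$ with both pieces c.e., so case~(1) (not a split) and case~(2) (trivial split) of Theorem~\ref{nonuni} are both impossible. The only remaining alternative is case~(3): $W_{e_0} \sqcup W_{e_1}$ is a Friedberg split of $W_e$ and $W_e$ is not $\mathcal{D}$-maximal. Shavrukov's Theorem~\ref{Shav} then guarantees that any such $W_e$ possesses a non-trivial non-Friedberg split, so $e$ already lies in the desired set $J$.

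The remaining task is to upgrade this single witness to an infinite set. Here I would exploit the ``moreover'' clause of Theorem~\ref{nonuni}, which produces $e$ effectively from an index for $h$. Inside the proof of Theorem~\ref{nonuni} the fixed point $e$ is supplied by Kleene's Recursion Theorem, which in fact has infinitely many fixed points; concretely, for any preassigned bound $N$ one can restrict the fixed-point search to indices above $N$ and still return a valid $e > N$ satisfying the hypotheses of the theorem. Iterating this observation, choosing $N$ larger than any finite list of previously extracted witnesses, produces a strictly increasing sequence $e_1 < e_2 < \cdots$, each satisfying case~(3) and hence each lying in $J$.

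The argument is essentially bookkeeping once Theorem~\ref{nonuni} is in hand. The only mildly delicate point is confirming that the recursion-theoretic fixed point used in Theorem~\ref{nonuni} can be pushed above any preassigned bound uniformly in $h$; this is the standard uniform infinite-fixed-point version of Kleene's Recursion Theorem and presents no real obstacle.
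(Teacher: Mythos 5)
Your argument is correct, but it takes a genuinely different route from the paper's. You obtain infinitely many witnesses by reopening the proof of Theorem~\ref{nonuni} and observing that the Kleene fixed point used there can be pushed above any preassigned bound $N$, so each run of the construction with a larger fixed point produces a fresh index $e>N$ for which case~(3) holds. The paper instead keeps Theorem~\ref{nonuni} as a black box: having found $e_i$, it uses Shavrukov's Theorem~\ref{Shav} to pick a concrete non-trivial non-Friedberg split $W_{a_i}\sqcup W_{b_i}$ of $W_{e_i}$, redefines $h_{i+1}(e_i)=\langle a_i,b_i\rangle$ (leaving $h_{i+1}=h_i$ elsewhere), and applies Theorem~\ref{nonuni} to $h_{i+1}$. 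The new index $e_{i+1}$ cannot coincide with any $e_j$, $j\le i$, because at those indices $h_{i+1}$ already gives a non-Friedberg split, ruling out case~(3) there. The two arguments buy different things: the paper's modification of $h$ is entirely modular in Theorem~\ref{nonuni} and its ``moreover'' clause, at the cost of maintaining a sequence of functions $h_i$; yours avoids altering $h$ at all but depends on the reader accepting (or supplying a proof of) the uniform fixed-point-above-a-bound refinement of the Recursion Theorem, which is standard but is not literally stated as part of Theorem~\ref{nonuni}. Both are legitimate, and both correctly use the fact that $h$ being a splitting procedure kills cases~(1) and~(2), so only case~(3) survives and Shavrukov's theorem then supplies the required non-Friedberg split.
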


\begin{proof}[Proof of the Corollary]
  Let $h_0 =h$ and apply Theorem~\ref{nonuni} to get $e_0$. Only Case
  (3) can apply. So $W_{e_0}$ has a non-trivial non-Friedberg split
  but $h(e_0)$ gives a Friedberg split.  Inductively, assume for all
  $j \leq i$, that $h_j$ and distinct $e_j$ exist and that Case (3)
  applies to $W_{e_j}'$.  Let $W_{a_i} \sqcup W_{b_i}$ be a
  non-trivial non-Friedberg split of $W_{e_i}$.  Let
  $h_{i+1}(e_i) = \langle a_i, b_i \rangle$ and if $e \neq e_i$ let
  $h_{i+1}(e) = h_i(e)$.  Apply Theorem~\ref{nonuni} to $h_{i+1}$ to
  effectively get an $e_{i+1}$.  Case (3) applies to $e_{i+1}$ and
  $e_{i+1} \neq e_j$, for all $j \leq i$. Let $J$ be the 
  infinite set $ \{ e_i | i \in \omega\}$.
\end{proof}

We can create a splitting procedure that is correct on infinite many
indices of a non-$\mathcal{D}$-maximal set.  Take
$A_0 \sqcup A_1 = A= W_a \sqcup W_b = W_c$ to be a non-trivial
non-Friedberg splitting of $A$.  Using the padding lemma, let $I$ be
an infinite computable set of indices for $A$.  Define $h(e)$ to be
$\langle a, b \rangle$ if $e \in I$ and $h_F(e)$ otherwise, where
$h_F$ is from Corollary~\ref{uni}.  By Rice's Theorem, $I$ is not all
indices for $A$. But the following is open.

\begin{question}
  Is there a splitting procedure $h$ and a c.e.\ set $A$ with a
  non-trivial non-Friedberg split such that, 
  if $W_e = A$ then 
  $h(e)$ gives a a non-trivial non-Friedberg split of $W_e= A$?
\end{question}

\section{Proof of Theorem~\ref{nonuni}}

The goal of the rest of the paper is to provide a proof of the above
Theorem~\ref{nonuni}. Assume that we are given $h$ and we will
construct $A$.  Via the Recursion Theorem we can assume that
$W_e = A$.  Also assume that $h(e) = \langle e_0, e_1 \rangle$.

For our proof we will work using an oracle for certain $\Pi^0_2$
questions. Certainly $\mathbf{0''}$ works but is overkill. The index
set of all infinite c.e.\ sets works nicely. We will use a tree
argument to provide answers to our $\Pi^0_2$ questions. The tree will
also provide a framework for our construction.

We will build $A$ in pieces. First we will construct a $\Delta^0_3$
list of pairwise disjoint computable sets $R$ such that every c.e.\
set or it's complement will be in the union of finitely many of these
computable sets and the union of all them is $\mathbb{N}$.  Inside
each of these computable sets we will build a piece of $A$.  The
default is that $A$ will be maximal inside each $R$ but finite or
cofinite inside $R$ are also possible. The construction will ensure
that the union of these pieces is a c.e.\ set $A$.  If $A$ is maximal
in only finitely many of these computable sets then $A$ will turn out
to be $\mathcal{D}$-maximal. 

We will \emph{try} to construct infinite, coinfinite, computable sets
$R_i$ such that, for all $j$, either
\begin{equation}
  \label{eq:2}
  W_j \subseteq^* \bigsqcup_{i\leq j} R_i \cup A,
\end{equation}

or
\begin{equation}
  \label{eq:3}
  W_j \cup \bigsqcup_{i\leq j} R_i \cup A =^* \mathbb{N}.
\end{equation}
(We will remind the reader that $X =^*Y$ iff $(X-Y) \sqcup (Y-X)$ is
finite.)  Since these sets are meant to be computable we also have to
build $\overline{R}_i$ while we are building $R_i$. Assume that we
have built the sets $R_i$ up to $j$.  The balls in
$\bigcap_{i< j} \overline{R}_i$ have not yet been added to $R_j$ or
$A$.  So our construction will ensure
$(\bigcap_{i< j} \overline{R}_i ) = (\bigcap_{i< j} \overline{R}_i )
\backslash A$ is infinite.  To build $R_j$ ask if
\begin{equation}
  \label{eq:5}
  P_j = (W_j \cap \bigcap_{i< j} \overline{R}_i ) \backslash
  A
\end{equation}
is infinite.  This is a $\Pi^0_2$ question.  If $P_j$ is infinite, we
will build $\overline{R}_j$ as a subset of $W_j$, so that
Equation~\ref{eq:3} is satisfied. When we add balls from the set
$\bigcap_{i< j} \overline{R}_i$ to $R_j$, we will make sure that there
is at least one ball in $W_j \cap \bigcap_{i< j} \overline{R}_i$
currently uncommitted. We will add that ball to $\overline{R}_j$ and
the rest of the balls under consideration to $R_j$.  We will do this
infinitely often. In this case, we satisfy Equation~\ref{eq:3}.  If
$P_j$ is finite, then, since
$(\bigcap_{i< j} \overline{R}_i ) \backslash A$ is infinite, we just
build $R_j$ and $\overline{R}_j$ to be infinite within
$\bigcap_{i< j} \overline{R}_i$ and Equation~\ref{eq:2} is satisfied.

Now inside each $R_i$ we will build $A$ to be finite, cofinite, or
maximal depending on various outcomes. The default will be for $A$ to be 
maximal in $R_i$. To do this we use the construction presented in
\citet[X.3.3]{Soare:87} as a guide to work inside $R_i$. We will go
over the details later. Since maximal sets are not computable, $A$
will not be computable.  Assume that $A$ is maximal inside $R_i$ and
$R_l$, where $l \neq i$, then, since $A \cap R_l$ is a non-computable
subset of $A \cap \overline{R}_i$,
$A = (A \cap R_i) \sqcup (A \cap \overline{R}_i)$ is a non-trivial
non-Friedberg split of $A$.  The details follow the construction in
Subsection~\ref{sec:proviso-2:-there}. Now by Theorem~\ref{Shav}, $A$
is not $\mathcal{D}$-maximal.  If $W_{e_0} \sqcup W_{e_1}$ is not a
split of $A$ then we are done.  So we may as well assume that
$W_{e_0} \sqcup W_{e_1}$ is a split of $A$.

We will now consider how this split behaves inside each $R_i$.  Since
$A$ is maximal inside $R_i$ there are two choices either the split is
trivial or Friedberg. We are going to ask an infinite series of
questions designed to tell if the split inside $R_i$ is trivial. The
questions are is ``$W_k \sqcup (W_{e_0} \cap R_i )= R_i$" and is
``$W_k \sqcup (W_{e_1} \cap R_i )= R_i$", for all $k$.  Again these
questions are $\Pi^0_2$.  A positive answer will tell us the split is
trivial inside $R_i$ and which set $W_{e_0} \cap R_i$ or
$W_{e_1} \cap R_i$ is computable.

Assume that we get a positive answer and the information that the set
$W_{e_0} \cap R_i$ is computable.  In this case we will take the
following action: Dump almost all of $R_l$, for $l < i$, into $A$ and,
for $l> i$, stop adding balls from $R_l$ into $A$.  In fact, stop
building $R_l$.  In this case, $A$ is computable outside $R_i$ and
hence $W_{e_0}$ must also be computable. So $W_{e_0} \sqcup W_{e_1}$
is a trivial split of $A$.  We act similarly if $W_{e_1} \cap R_i$ is
computable.

If none of the answers to these questions for each $R_i$ is positive
then $W_{e_0} \sqcup W_{e_1}$ is a non-trivial split of $A$. We know
inside each $R_i$ the split is Friedberg. We must show that globally
the split is Friedberg.  Let's consider $W_j$.  If Equation~\ref{eq:2}
holds, then $W_j - A \subseteq^* \bigsqcup_{i\leq j} R_i$. So, if
$W_j - A$ is not a c.e.\ set neither are $W_j - W_{e_0}$ and
$W_j - W_{e_1}$.  So assume Equation~\ref{eq:3} holds, $W_j - A$ is
not a c.e.\ set, but $W_j - W_{e_0}$ is a c.e.\ set.  For any $n>j$,
$(W_j - A) \cap R_n$ cannot be a c.e.\ set. But
$(W_j - W_{e_0}) \cap R_n$ is a c.e.\ set. This contradicts that our
split is Friedberg inside $R_n$.  A similar argument works if
Equation~\ref{eq:3} holds, $W_j - A$ is not a c.e.\ set, but
$W_j - W_{e_1}$ is a c.e.\ set.  Our split is a Friedberg split.

With one positive answer, we must take action to ensure that our given
split is trivial.  One positive answer is a $\Sigma^0_3$ event. If all
questions have negative answers then we have a $\Pi^0_3$ event and, in
this case, our split is a Friedberg split.

\subsection{Coding our $\Pi^0_2$ Questions via a Tree}

We will work with the tree, $2^{< \omega}$.  We consider the tree to
grow downward.  At the empty node, $\lambda$, we will construct $A$
and $\overline{R}_\lambda= \tilde{R}_\lambda = \mathbb{N}$.  At nodes
$\alpha$ of length $i^2 >0$ we will construct $R_\alpha$ and
$\tilde{R}_\alpha$ ($\overline{R}_\alpha = \sqcup_{\beta \subset \alpha} R_\beta \sqcup
\tilde{R}_\alpha$.)
We will call such nodes $R$-nodes.  The idea is that if $f$ is the
true path, $|\alpha| = i^2$, and $\alpha \prec f $, then
$R_\alpha = R_i$ and
$\bigsqcup_{\beta\subseteq \alpha} R_\beta \sqcup \tilde{R}_\alpha =^*
\mathbb{N}$.  (We will start indexing the $R_i$ at $1$.)

Since we need to ask questions about the potential $R_i$'s we need the
indices for the $R_\alpha$'s.  So the real outcome of our construction
is a pair of functions $g$ and $\tilde{g}$ such that $W_{g(\lambda)} =
A$, $W_{g(\alpha)} = R_\alpha$, and $W_{\tilde{g}(\alpha)} =
\tilde{R}_\alpha$, for all $\alpha$.  Via the Recursion Theorem, we
can assume we know $g$ and $\tilde{g}$ prior to the construction. We
will use this knowledge to code our questions into the tree.

Let $|\gamma| = j^2-1$.  Let $\delta \subset \gamma$ such that
$|\delta| = (j-1)^2$. At $\gamma$ we will code the question ``Is $(W_j
\cap \tilde{R}_\delta ) \backslash A$ infinite?''.  \emph{Strictly}
between two $R$-nodes of length $j^2$ and $(j+1)^2$ there are
$\big((j+1)^2-1\big)- j^2 = 2j$ nodes.  If $|\gamma | = j^2 + 2k-2$,
$1 \leq k \leq j$, $\beta \preceq \gamma$, and $|\beta| = k^2$, then
at $\gamma$ code the question ``Does $W_j \sqcup (W_{e_0} \cap R_\beta
)= R_\beta$?''.  If $|\gamma | = j^2 + 2k-1$, $1 \leq k \leq j$,
$\beta \preceq \gamma$, and $|\beta| = k^2$, then at $\gamma$ code the
question ``Does $W_j \sqcup (W_{e_1} \cap R_\beta )= R_\beta$?''.
(The only difference in these two sentences is the length of $\gamma$
differences by 1 and the second uses $W_{e_1}$ rather than $W_{e_0}$.)

Via the use of the Recursion Theorem, as we discussed two paragraphs
above, these are uniformly $\Pi^0_2$ questions.  There is a uniform
reduction from these questions to the index set of infinite c.e. sets
or INF.  So uniformly, for all $\gamma$, we can associate a c.e.\
\emph{chip} set $C_\gamma$ such that $C_\gamma$ is infinite iff the
question coded at $\gamma$ has a positive answer.

Earlier we have called some nodes $R$-nodes.  These were the nodes
whose length is a prefect square.  Other than the empty node, we will
call the remaining nodes $A$-nodes; they provide answers to questions
coded at $\alpha$'s predecessor, $\alpha^-=\gamma$.  We call an
$A$-node $\alpha$ \emph{positive} iff $\alpha\hat{~}1 = \gamma$.
Otherwise an $A$-node is negative.

We will inductively define the \emph{true path}, $f$. $\lambda$ is on
$f$.  Assume that $\alpha \preceq f$. If $\alpha$ is a positive
$A$-node then $f = \alpha$. Otherwise, $\alpha\hat{~}1 \prec f $ iff
$C_\alpha$ is infinite and $\alpha\hat{~}0 \prec f $ iff $C_\alpha$ is
finite.  Since nodes of length $0$ and $1$ are not $A$-nodes, there is
always an $R$-node on the true path. Either all the $A$-nodes on $f$
are negative or $f$ is finite and ends with a positive $A$-node.

A key to the construction is the approximation to the true path at
stage $s$, $f_s$.  Define $f_0 = \lambda$, the empty node. Assume that
$\alpha \subseteq f_{s+1}$ and $|\alpha| < s^2$.  If $\alpha$ is a
positive $A$-node, let $f_{s+1} = \alpha$. Assume that $\alpha$ is not
a positive $A$-node. Let $t$ be the greatest stage less than $s+1$
such that $\alpha \subseteq f_t$.  If no such stage exists, let $t=0$.
If $C_{\alpha, t} \neq C_{\alpha,s+1}$ then let $\alpha\hat{~}1
\subseteq f_{s+1}$.  Otherwise, $\alpha\hat{~}0 \subseteq f_{s+1}$.

Since nodes of length $s^2$ are $R$-nodes, for $s>0$, $f_s$ always
ends in an $R$-node or a positive $A$-node.  We say $\alpha <_L \beta$
(or $\alpha$ is to the left of $\beta$) iff $\alpha\subsetneq \beta$
or there is a $\gamma$ such that $\gamma\hat{~}1 \subseteq \alpha$ and
$\gamma\hat{~}0 \subseteq \beta$. By induction on $l$, we can show
that $\liminf_s f_s \restriction l = f \restriction l$ (the $\liminf$
is measured w.r.t.\ $<_L$).  So, $\liminf_s f_s = f $. If
$f_s <_L \alpha$ then there is always a least (in terms of length)
$R$-node or positive $A$-node, $\beta$, such that
$\beta \subseteq f_s$ and $\beta <_L \alpha$.

\subsection{Action on the Tree}\label{action}

We will use the tree and $f_s$ to construct $A$, $R_\alpha$, and
$\tilde{R}_\alpha$, for all $\alpha$.  We think of this construction
as a pinball machine. Integers or balls enter at top node, $\lambda$,
and move downwards and leftwards. The position of a ball, $x$, at the
end of stage $s$ is given by the function $\alpha(x,s)$.  The movement
on the tree is done such that the $\lim_s \alpha(x,s)$ exists. Let
$\alpha(x)=\lim_s \alpha(x,s)$. Initially, $\alpha(x,s)$ is not
defined (so $x$ is not on the machine) and, unless explicitly changed,
$\alpha(x,s)$ remains the same from stage to stage.  For the balls on
the machine, at every stage $s$, $\alpha(x,s)$ is always an $R$-node
or a positive $A$-node, and $|\alpha(x,s)| \leq x^2$.  (The bound
$x^2$ was chosen since balls can only rest at $R$-nodes or positive
$A$ nodes and the length of $R$-nodes are perfect squares.)  If a ball
$x$ enters $A$ at stage $s$, $x$ is removed from the tree at stage $s$
and $\alpha(x,s)$ is undefined again.

Entering the machine and leftward movement is determined by $f_{s+1}$.
Downward movement will be discussed later.  Let $\beta$ be the
$R$-node of length $1$ such that $\beta \subseteq f_{s+1}$.  Let
$\alpha(s,s+1) = \beta$.  So all the balls on the machine at stage $s$
are less than $s$.  Assume that $\alpha(x,s) = \alpha$ and
$f_{s+1} <_L \alpha$.  Then there is always a least (in terms of
length) $R$-node or positive $A$-node, $\beta$, such that
$\beta \subseteq f_{s+1}$, $\beta \not\subset \alpha$, and
$\beta <_L \alpha$.  Let $\alpha(x,s+1) =\beta$.  Since
$|\alpha| \leq x^2+1$, the same is true for $\beta$. A ball $x$ can
only move leftward finitely many times.  Since $\liminf_s f_s =f$,
$\alpha(x) <_L f$ or $\alpha(x) \subseteq f$.

Assume that $\alpha$ is an $R$-node.  So the length of $\alpha$ is
$j^2$ for some $j$. Either $\alpha = \alpha^-\hat{~}1 $ or
$\alpha = \alpha^-\hat{~}0$.  At $\gamma=\alpha^-$ we asked the
question ``Is $(W_j \cap \tilde{R}_\delta) \backslash A $ infinite?'',
where $\delta$ is the greatest proper $R$-subnode of $\gamma$.
If $\alpha$ ends with a $1$, then $\alpha$ believes this set is
infinite. If $\alpha$ ends with a $0$ then $\alpha$ believes this set
is finite.  If $\alpha$ ends with a $1$ let
$P_\alpha = (W_j \cap \tilde{R}_\delta) \backslash A $.  Otherwise,
let $P_\alpha = \tilde{R}_\delta\backslash A $.  We also defined
$P_\alpha$ for positive $A$-nodes to be
$P_\alpha = \tilde{R}_\delta \backslash A $, where
$\delta\subset \alpha$ is the greatest $R$-node contained in $\alpha$.
$\alpha$ wants all balls in $P_\alpha$ to go though $\alpha$.
Moreover the construction of $A$ inside $R_\alpha$ requires that
$\alpha$ see fresh balls in $P_\alpha$. So these $\alpha$ are allowed
to pull balls in $P_\alpha$.

We will now work on the remaining movement, pulling, on our pinball
machine.  An $R$-node or positive $A$-node $\alpha$ is allowed to pull
balls from subnodes of $\alpha$ or nodes to the right of $\alpha$.
Pulling will be downward or leftward movement. The only downward
movement allowed is done via pulling.  When $\alpha$ can pull balls is
controlled by $f_s$.  When $\alpha \subseteq f_{s}$, $\alpha$ puts a
request coded by $s$ on a list denoted by $\mathcal{P}_\alpha$ at stage $s$.
$\alpha$ can only pull balls when there is an unfulfilled request on
the list. If $\alpha$ takes action (as described below) at stage $s$
then the least request on $\mathcal{P}_\alpha$ has been fulfilled.  If
$f_s <_L \alpha$ then all the current requests at stage $s$ on
$\mathcal{P}_\alpha$ are declared fulfilled.

Let $\alpha$ be an $R$-node or $A$-node of length $l$ and assume that
there is an unfulfilled request on $\mathcal{P}_\alpha$ at stage $s$.
Assume that there are two different balls, $x_0$ and $x_1$, such that
$x_i > l$, $x_i \in P_{\alpha,s}$, and either
$\alpha <_L \alpha(x_i,s)$ ($x_i$ is to the right of $\alpha$) or
$ \alpha(x_i,s) \subset \alpha$ ($x_i$ is above $\alpha$).  For
leftmost $\alpha$ and the least such pair, at stage $s+1$, take the
following action: Let $\alpha(x_i,s+1) = \alpha$ and, if $\alpha$ is a
$R$-node, then put $x_0$ into $R_{\alpha,s+1}$ and put $x_1$ into
$\tilde{R}_{\alpha,s+1}$.  For all balls $y$, such that
$|\alpha|^2 < y < \max x_i$, $y \in \tilde{R}_{\delta,s}$ (using the
above notation for $\delta$), and either $\alpha <_L \alpha(x_i,s)$ or
$ \alpha(x_i,s) \subset \alpha$, let $\alpha(y,s+1)= \alpha$ and, if
$\alpha$ is $R$-node, then add $y$ to $R_{\alpha,s+1}$. This request
is  declared fulfilled.  

There is just a little more to the construction of $R_\alpha$.  In the
next section we will discuss the construction of $A$ inside
$R_\alpha\backslash A$. Recall earlier that we said that if a ball
enters $A$ it is removed from the machine.  That means that none of
the above balls added to $R_\alpha$ and $\tilde{R}_\alpha$ are in $A$.
To make sure that $R_\alpha$ is computable when $\alpha \subset f$ we
must be sure that almost all balls from $\tilde{R}_{\delta}$ enter
$R_\alpha$ or $\tilde{R}_\alpha$.  Because of the construction to the
right of the true path, infinitely many balls in $\tilde{R}_{\delta}$
might enter $A$ before they enter $R_\alpha$ or $\tilde{R}_\alpha$.
The balls we are talking about are in the c.e.\ set $(
\tilde{R}_{\delta} \searrow A ) \backslash (R_\alpha \sqcup
\tilde{R}_\alpha)$. The above action cannot add these balls to
$R_\alpha$ or $\tilde{R}_\alpha$.  So we will simply add these balls
to $R_\alpha$. So the above set is equal to $A \searrow R_\alpha$.

Let's see inductively that for $\alpha \subset f$ and $\alpha$ is an
$R$-node, that $R_\alpha\backslash A$ is infinite,
$ \tilde{R}_\alpha \backslash A$ is infinite,
$\bigsqcup_{\beta \subseteq \alpha}R_\beta \sqcup \tilde{R}_\alpha =^*
\mathbb{N}$,
and $A \searrow R_\alpha$ is computable. Let $\delta$ be the greatest
proper $R$-subnode of $\alpha$.  If no such node exists let
$\delta=\lambda$.  So by our inductive hypothesis
$ \tilde{R}_\delta \backslash A$ is infinite.  Moreover, by the
movement on the tree, only finite many of these balls are ever to the
left of $\alpha$. Ignore those balls. Since $\alpha$ is on the true
path, infinitely many requests are placed on $\mathcal{P}_\alpha$ and
only finitely many of them are fulfilled because $f_s <_L \alpha$. We
claim all of the remaining requests are fulfilled. If not then all but
finitely balls of $P_\alpha$ can be pulled by $\alpha$ and $\alpha$
will eventually pull two balls fulfilling the desired request.  So the
action discussed two paragraphs above occurs infinitely often. We have
ensured that $R_\alpha \backslash A$ and
$\tilde{R}_\alpha\backslash A$ are infinite.  The sets $R_\beta$, for
$\beta \subseteq \alpha$, and $\tilde{R}_\alpha$ are all pairwise
disjoint.  By the action in the above paragraph the union of all these
sets is almost everything.  Since the disjoint union of these sets is
almost everything, we also have that $A\searrow R_\alpha$ is
computable. Moreover if $\alpha$ ends with a $1$ then
$\tilde{R}_\alpha \subseteq W_j$, where $|\alpha| = j^2$, and hence
$W_j \cup A \cup \bigsqcup_{\beta\subseteq \alpha} R_\beta=^*
\mathbb{N}$
and Equation~\ref{eq:3} holds.  If $\alpha$ ends with a $0$ then
$W_j \subseteq^* A \cup \bigsqcup_{\beta\subseteq \alpha} R_\beta$ and
Equation~\ref{eq:2} holds.

Assume $f$ is finite. So $\alpha=f$ is a positive $A$ node. Let
$\gamma$ be the greatest $R$-subnode of $\alpha$. Let $Z$ be the set of
$x$ such that there is a stage $s$ where $\alpha(x,s)=\alpha$.  $Z$ is
a c.e.\ set.  Because $\alpha =f$ for almost all balls $x$ in $Z$,
$\alpha(x) = \alpha$. Almost all of the balls in $Z$ never enter
$A$. $Z$ is the end of the line. Recall that $P_\alpha =
\tilde{R}_\delta \backslash A$.  By the pulling action almost all of
the balls in $P_\alpha$ will enter $Z$.  By the above paragraph,
$\bigsqcup_{\beta \subseteq \delta} R_\beta \sqcup \tilde{R}_\delta
=^* \mathbb{N}$.  So $Z$ and $A \searrow \tilde{R}_\delta$ are computable
sets.

\subsection{The construction of $A$}

We will build $A$ to be maximal inside $R_\alpha\backslash A$.  Since
$\alpha \subset f$, $R_\alpha \backslash A$ is an infinite computable
set.  Let $R= R_\alpha \backslash A$.  We build $A \cap R$ stagewise
based on the construction of a maximal set from \citet[Theorem
X.3.3]{Soare:87}.

The main requirement is to ensure that, for all $e$,
\begin{equation}
  \label{eq:4}\tag*{$\mathcal{M}_e:$}
  W_e \cap
  R \subseteq^* A \cap R \text{ or } (W_e \cap R) \cup (A \cap R) = R.
\end{equation}
$\sigma(e,x,s) = \{ i : i \leq e \wedge x \in W_{i,s}\}$ is the
$e$-state of $x$ at stage $s$.  We will have a series of markers
$\Gamma^\alpha_n$ with $a^{\alpha,s}_n$ denoting the position of
$\Gamma^\alpha_n$ at stage $s$ and such that $\overline{A}_s \cap R =
\{ a^{\alpha,s}_0 < a^{\alpha,s}_1 \ldots \}$.  Each marker $\Gamma_e$
wants to move to maximize the $e$-state of $a^\alpha_e = \lim
a^{\alpha,s}_e$.

Initially, we let $A_0 \cap R = \emptyset$ and define the
$a^{\alpha,0}_n$ accordingly.  At stage $s+1$, if there is a least $e$
such that for some least $i$, $e < i < s$ and
$\sigma(e,a^{\alpha,s}_i,s) > \sigma(e,a^{\alpha,s}_e,s)$, then we
dump $a^{\alpha,s}_e, a^{\alpha,s}_{e+1} , \ldots a^{\alpha,s}_{i-1}$
into $A$ at stage $s+1$.  So $a^{\alpha,s+1}_e = a^{\alpha,s}_i$.
Let's call this dumping the \emph{original dumping}. If $e$ does not
exist do nothing.

Certain positive $A$-nodes $\gamma$ below $\alpha$ can also dump balls
from $R = R_\alpha \backslash A$ into $A$. Let $\gamma$ be a positive
$A$-node such that $\alpha \subset \gamma$ and at $\gamma^-$ is coded
the question ``Is $W_j \sqcup (W_{e_0} \cap R_\beta) = R_\beta$
infinite'?''  or ``Is $W_j \sqcup (W_{e_1} \cap R_\beta) = R_\beta$
infinite?'', for some $j$ and some $\beta\neq \alpha$.  $\gamma$
believes that our split is trivial inside some $R_\beta$ and wants to
dump almost all of $R_\alpha$ into $A$.  Let $t_{\gamma,s}$ be the
maximum of $|\gamma|$ and the greatest stage $t$ such that $t \leq s$
and $f_t <_L \gamma$.  Assume $\gamma \subset f_{s+1}$, dump
$a^{\alpha,t_{\gamma,s}}_s$ into $A$ at stage $s+1$ (if the above
movement of balls at stage $s+1$ has already forced
$a^{\alpha,t_{\gamma,s}}_s\neq a^{\alpha,t_{\gamma,s}}_{s+1}$ that is
enough).  Let's call this dumping, \emph{extra dumping}.

The positive $A$-nodes to the left or to the right of the true path
only dump $a^{\alpha,e}_s$ finitely often.  The ones to the left of
the true path are only on $f_s$ finitely often and hence only dump
finitely many balls from $R_\alpha \backslash A$ into $A$. If $f <_L
\gamma$ then $\lim_s t_{\gamma,s}$ goes to infinity and $\gamma$ can
only dump each $a^{\alpha,e}_s$ into $A$ finitely often.

Assume $\gamma =f_s $ is a positive $A$-node and $\alpha \subset
\gamma$ and at $\gamma^-$ is coded the question ``Is $W_j \sqcup
(W_{e_0} \cap R_\beta) = R_\beta$ infinite?'' or ``Is $W_j \sqcup
(W_{e_1} \cap R_\beta) = R_\beta$ infinite?'', for some $j$ and some
$\beta\neq \alpha$. Then $\lim_s t_{\gamma,s}$ exists and almost all
balls in $R_\alpha$ are dumped into $A$, i.e.\ $(R_\alpha \backslash
A) \subseteq^* A$.  For the rest of this section we will assume the
above assumption is false.

So the extra dumping at most dumps each $a^{\alpha,e}_s$ into $A$
finitely often.  Assume that $a^{\alpha,e}_s$ will not be dumped after
stage $s$ via our extra dumping.  Since the original dumping only
dumps to increase the $e$-state and there are $2^e$ many $e$-states,
the original dumping only dumps $a^{\alpha,e}_s$ finitely often.
Hence $\lim _s a^{\alpha,e}_s$ exists and equals $ a^{\alpha,e}$.

Now we are in a position to show that the requirements $\mathcal{M}_e$
are met. Assume that $\mathcal{M}_i$ holds for $i<e$ and there is an
$(e-1)$-state $\tau$ such that almost all of $R-A$ are in state
$\tau$.  Assume all balls greater than $k$ in $R-A$ are in state
$\tau$.  Let $$M = \{ x : \exists s, n [\sigma(e-1,x,s)=\tau \wedge
n\geq k \wedge x = a^{\alpha,n}_s]\}.$$ So $R-A \subseteq^* M$. Assume
$(M \cap W_e) \backslash A$ is finite.  Then $W_e \cap R \subseteq^* A
\cap R$ and almost all balls in $R-A$ are in $e$-state $\tau$. Now
assume $(M \cap W_e) \backslash A$ is infinite.  Let $n\geq k$ and
$\sigma(e,a^{\alpha,n}_s,s) = \tau$.  Since eventually there will be
an $m$ and stage $t$ where $\sigma(e,a^{\alpha,m}_t,t) = \tau \cup
\{e\}$, $a^{\alpha,n} \neq a^{\alpha,n}_s$. So $R-A \subseteq^*
W_e$. So, $A$ is maximal inside $R_\alpha$.

\subsection{Putting it all together}

Recall that if $\alpha=f$ is a positive $A$-node then, for some $j$,
some $i$, and some $\beta \subset f$, $W_j$ witnesses that $W_{e_i}
\cap A$ is a computable subset of $R_\beta$. In this case, a
$\Sigma^0_3$ event occurs.  By the work in the above paragraph, we
know that $A \cap R_\beta$ is maximal in $R_\beta$ and hence $A \cap
R_\beta$ is not computable.  So $A$ is not computable.  So if $W_{e_0}
\sqcup W_{e_1}$ is not a split of $A$ we are done. Assume otherwise.
By our assumption, inside $R_\beta$, $W_{e_0} \sqcup W_{e_1}$ is the
trivial split. Let $\delta$ be the greatest $R$-subnode of $\alpha$.  By
work in the last paragraph of Section~\ref{action}, there is a set
$Z$, such that $\bigsqcup_{\gamma \subseteq \delta} R_\gamma \sqcup
(A\searrow \tilde{R}_\delta) \sqcup Z =^* \mathbb{N}$ and $Z \searrow A =
\emptyset$.  Now, by the above section, for all $\gamma$, such that
$\gamma\subset \alpha$ and $\gamma \neq \beta$, $A \cap R_\gamma =^*
R_\gamma$.  Therefore outside of $R_\beta$, $A$ is computable; i.e.\
$A \cap \overline{R_\beta}$ is computable.  Any split of a computable
set is trivial.  Therefore, $W_{e_0} \sqcup W_{e_1}$ is a trivial split
of $A$. So, by Theorem~\ref{nonuni}, $A$ is 
$\mathcal{D}$-maximal. 

For the remaining part of this paper, assume that $f$ is an infinite
path through $2^{<\omega}$.  So a $\Pi^0_3$ event occurs. In this case,
by the above section, for all $\alpha \subset f$, where $\alpha$ is an
$R$-node, $A \cap R_\alpha$ is not computable.  So $A$ is not
computable.  If $W_{e_0} \sqcup W_{e_1}$ is not a split of $A$ we are
done. Assume otherwise.  Let $\alpha$ be any $R$-node where $\alpha
\subset f$.  Let $A = (A \cap R_\alpha) \sqcup (A \cap
\overline{R_\alpha})$.  There is an $R$-node $\beta \neq \alpha$ on the
true path. $A \cap R_\beta$ is also not computable.  Hence, $ (A \cap
\overline{R_\alpha})$ is not computable and $A = (A \cap R_\alpha)
\sqcup (A \cap \overline{R_\alpha})$ is a non-trivial split of $A$.
The split is not Friedberg, since $R_\alpha - A$ is not a c.e.\ set
but $R_\alpha - (A \cap \overline{R_\alpha})=R_\alpha$ is computable.
Therefore, by Theorem~\ref{nonuni}, $A$ is not
$\mathcal{D}$-maximal. 

It just remains to show that $W_{e_0} \sqcup W_{e_1}$ is a Friedberg
split of $A$. We know that for all $\gamma \subset f$,
$W_{e_0} \sqcup W_{e_1}$ is a Friedberg split of $A$ inside
$R_\gamma$.  Since splits of maximal sets are either trivial or
Friedberg, otherwise the above $\Sigma^0_3$ event occurs.  We must
show that globally the split is Friedberg.  Let's consider $W_j$.  Let
$\alpha \subset f $ such that $|\alpha|=j^2$.  By the work in the
second to last paragraph of \ref{action}, either
$W_j \subseteq^* A \cup \bigsqcup_{\beta \subseteq \alpha} R_\beta$ or
$W_j \cup A \cup \bigsqcup_{\beta\subseteq \alpha} R_\beta=^*
\mathbb{N}$.
In the first case, if $W_j - A$ is not a c.e.\ set neither are
$W_j - W_{e_0}$ and $W_j - W_{e_1}$. Assume that
$W_j \cup A \cup \bigsqcup_{\beta\subseteq \alpha} R_\beta=^*
\mathbb{N}$.
Furthermore, assume $W_j - A$ is not a c.e.\ set, but $W_j - W_{e_0}$
is a c.e.\ set. For any $\gamma$, where
$\alpha \subset \gamma \subset f$, $(W_j - A) \cap R_\gamma$ cannot be
a c.e.\ set since this set contains $R_\gamma-A$ and $A$ is maximal
inside $R_\gamma$. But, since $W_j - W_{e_0}$ is a c.e.\ set,
$(W_j - W_{e_0}) \cap R_\gamma$ is a c.e.\ set. This contradicts the
fact that our split is Friedberg inside $R_\gamma$. A similar argument
works if $W_j - A$ is not a c.e.\ set, but $W_j - W_{e_1}$ is a c.e.\
set. So our split is a Friedberg split.  \hfill $\square$


\end{document}